\documentclass[11pt,twoside]{article}


\def\titlename{\huge Dual space and hyperdimension of   compact  hypergroups}

\title{\titlename}

\def\authname{Mahmood Alaghmandan and Massoud Amini}
\author{{\normalsize\sc \authname}}

\setlength{\textwidth}{16cm} \setlength{\textheight}{22cm}
\setlength{\topmargin}{0mm} \setlength{\evensidemargin}{0mm}
\setlength{\oddsidemargin}{0mm}


\usepackage{amsmath}
\usepackage{yfonts}

\usepackage[all]{xy}

\usepackage{xcolor}
\definecolor{blue1}{RGB}{32,78,170}
\definecolor{blue2}{RGB}{93,92,160}
\definecolor{blue3}{RGB}{40,51,202}
\definecolor{blue4}{RGB}{0,0,0}
\definecolor{purple1}{RGB}{128,0,128}

\definecolor{El}{rgb}{.4,.9,1}
\usepackage[pdftex,
            pdfauthor={Mamhood},
            linkcolor=blue1,
            citecolor=blue3,
            colorlinks=true]{hyperref}

\usepackage{titlesec}

\titleformat{\section}
  {\normalfont\fontsize{12}{15}\bfseries}{\thesection}{1em}{}

\usepackage[english]{babel}
\usepackage{blindtext}
\usepackage[scaled=.90]{helvet}
\usepackage{xcolor}
\usepackage{titlesec}
\titleformat{\chapter}[display]
  {\normalfont\sffamily\huge\bfseries\color{blue4}}
  {\chaptertitlename\ \thechapter}{20pt}{\Huge}
\titleformat{\section}
  {\normalfont\sffamily\Large\color{blue4}}
{\thesection}{1em}{}

 \titleformat{\subsection}
  {\normalfont\fontsize{12}{14}\sffamily\color{blue4}}
  {\thesubsection}{1em}{}

\newcommand{\ma}[1]{\textcolor{blue2}{\textsf{#1}}}

            \newcounter{pulse}[section] 

\numberwithin{pulse}{section}
\numberwithin{equation}{section}


\newtheorem{theorem}[pulse]{\bf Theorem}
\newtheorem{proposition}[pulse]{\bf Proposition}
\newtheorem{lemma}[pulse]{\bf Lemma}


\newtheorem{lem}[pulse]{\bf Lemma}
\newtheorem{cor}[pulse]{\bf Corollary}

\newtheorem{dummy-eg}[pulse]{\bf Example}
\newtheorem{dummy-rem}[pulse]{\bf Remark}
\newtheorem{dummy-ques}[qq]{\bf Question}
\newenvironment{eg}{\begin{dummy-eg}\upshape}{\end{dummy-eg}\ignorespacesafterend}
\newenvironment{rem}{\begin{dummy-rem}\upshape}{\end{dummy-rem}\ignorespacesafterend}

\newtheorem{dummy-def}[pulse]{\bf Definition}



\usepackage{amsmath,amssymb}

\newenvironment{dfn}{\begin{dummy-def}\upshape}{\end{dummy-def}\ignorespacesafterend}

\newenvironment{proof}{\noindent{\it Proof.}\/}{\hfill$\Box$\newline\ignorespacesafterend}


\newcommand{\norm}[1]{\| #1 \|}
\newcommand{\conj}{{\operatorname{Conj}}}
\newcommand{\SU}{\operatorname{SU}}
\newcommand{\ignore}[1]{{ }}
\newcommand{\supp}{\operatorname{supp}}
\newcommand{\AM}{\operatorname{AM}}
\newcommand{\tr}{\operatorname{tr}}

\markboth{{\footnotesize\sc  Mahmood Alaghmandan and Massoud Amini}}
 {{\footnotesize Dual space and hyperdimension of   compact  hypergroups}}

\openup4pt


\begin{document}

\maketitle

\begin{abstract}
We characterize dual spaces and compute   hyperdimensions of irreducible  representations  for two classes of compact hypergroups namely conjugacy classes of compact groups  and compact  hypergroups constructed by joining compact and finite hypergroups. Also studying the representation theory of finite hypergroups, we highlight some interesting differences and similarities between the representation theories of finite hypergroups and finite groups.  Finally, we compute the Heisenberg inequality for compact hypergroups.
\end{abstract}


\vskip2.0em

Richard Vrem studied representation theory of compact hypergroups \cite{vr}. He showed that, similar to the group case, for every irreducible representation $\pi$ of a compact hypergroup  $H$, $\pi$ is of a finite dimension $d_\pi$. 
Here we use $\widehat{H}$ to denote the maximal set of all irreducible  representations of $H$ which are  pairwise inequivalent. The set $\widehat{H}$  equipped with the discrete topology is called the  \ma{dual space} of $H$.

Vrem showed that coefficient functions on  compact hypergroups satisfy a hypergroup analogue of Peter-Weyl relation which is as follows \cite{vr}. For each pair $\pi,\sigma \in \widehat{H}$ there exists a constant $k_\pi$ such that for every  coefficient functions $\pi_{i,j}$ and $\sigma_{k,l}$,
\begin{equation}\label{eq:orth-of-coeficients}
\int_H \pi_{i,j}(x)\overline{\sigma_{k,l}(x)}dx=\left\{
\begin{array}{l c}
\displaystyle  \frac{1}{k_\pi} & \text{when $i=k$, $j=l$, and $\pi=\sigma$}\\
0 & \text{otherwise}
\end{array}
\right. .
\end{equation}
It is proved that $k_\pi\geq d_\pi$.  We call $k_\pi$, the \ma{hyperdimension} of $\pi$  after \cite{mm}. Recall that for a commutative (compact) hypergroup $H$, every representation $\pi$ is one dimensional.

\vskip0.5em

In this   paper, we study dual spaces and hyperdimensions of irreducible representations for compact hypergroups.
First, in Section~\ref{s:plancherel}, we present some preliminaries and simple computations on (commutative) compact hypergroups. It is interesting that the Plancherel theorem holds for commutative hypergroups. We show that the Plancherel measure on the dual space of hypergroups is nothing but the map that assigns each element of $\widehat{H}$ to its hyperdimension.

Second, in Section~\ref{s:classes-compact-hypergroups}, we characterize dual spaces and find hyperdimensions for  two   classes of compact hypergroups.   For a compact group $G$, the conjugacy classes  form a compact hypergroup. This hypergroup was introduced first by Jewett, in \cite{je}, as one of the prominent examples of compact hypergroups constructed on compact groups. Subsection~\ref{ss:dual-of-conj(G)} is dedicated to this class of commutative compact hypergroups. We also present a proof for the duality relation between the compact hypergroup of conjugacy classes and the discrete hypergroup constructed by irreducible representations of a compact group. The majority of the results in this subsection are known for the more general class of orbit hypergroups (look at \cite{center} and \cite{lasser-hartmann}).
  By joining a compact hypergroup and a finite hypergroup, one may construct a new compact hypergroup. This class of compact hypergroups first was defined and studied in \cite{vr2} where the dual space of commutative case was also studied. In Subsection~\ref{ss:join-hypergroups}, we generalize the   result of \cite{vr2} to (not necessarily commutative) compact hypergroup joins and also compute their hyperdimensions. 

Finite hypergroups have been of interest due to their  many applications in number theory, combinatorics, operator algebras and conformal field theory, \cite{wild}. In Section~\ref{s:finite-hypergroups} we study the representation theory of finite hypergroups. It is interesting that although this theory is very similar to the representation theory of finite groups, many dramatic differences appear in non-group cases.
For an amenable Banach algebra $A$, there is an associated amenability constant $\AM(A)$ (as defined by B. E. Johnson).  Vaguely speaking,  amenability constant  lets us measure amenability of   Banach algebras.
In this section we  study the amenability constant of hypergroup algebras for  finite commutative hypergroups and present a concrete formula to compute it. Interestingly we show that the lower bounds and boundary properties of the amenability constant of the center of the group algebras of finite  groups do not hold for  simple examples of finite commutative hypergroups. This study is a generalization of previous studies in \cite{AzSaSp, ma3, yemon-AM} on  ZL-amenability of finite groups.

We finish the paper with Section~\ref{s:uncertainty} on the uncertainty principle of compact hypergroups. 
The classical (Heisenberg) uncertainty principle states that a function and its Fourier transform cannot both be highly concentrated. In quantum mechanics, this implies that it is impossible to determine a particle's position and momentum simultaneously. We prove that a similar fact holds for   compact  hypergroups.
 We see that in the Heisenberg inequality of compact hypergroups, the hyperdimensions play an important role.

\begin{section}{Preliminaries}\label{s:plancherel}

Let $H$ be a compact hypergroup. We  assume that the Haar measure of $H$, denoted by $\lambda_H$,  is normalized unless otherwise is stated.  An  \ma{ (irreducible) representation} $\pi$ of $H$ is
\begin{itemize}
\item[(1)]{ an (irreducible) $*$-representation  from $M(H)$, the Banach $*$-algebra of bounded Borel measures on $H$, into ${\cal B}({\cal H}_\pi)$ for some Hilbert space ${\cal H}_\pi$,}
\item[(2)]{$\pi(e)=I$,}
\item[(3)]{and for each pair $\xi, \eta \in {\cal H}_\pi$, the coefficient function $\mu \mapsto \langle \pi(\mu) \xi, \eta\rangle$ forms a continuous function on $M(H)^+$ with respect to the weak topology.}
\end{itemize}

It is a consequence of this definition that each representation $\pi$ is norm decreasing. For each  irreducible   representation $\pi$ of $H$ and $x\in H$, $\pi(x)$ is a $d_\pi \times d_\pi$ matrix  and therefore the \ma{(hypergroup) character} $x \mapsto \chi_\pi(x)$ which is the trace of  $\pi(x)$ as well as $ x \mapsto \pi_{i,j}(x)$, the \ma{coefficient function} constructed by the $(i,j)$-th coefficient of the matrix $\pi(x)$ are continuous functions on $H$.

 One may easily  apply  the orthogonality relation  (\ref{eq:orth-of-coeficients}) to get the following relation for characters.
\begin{equation}\label{eq:characters-on-compact-hypergroups}
\int_H \chi_\pi(x)\overline{\chi_\sigma(x)}dx=\left\{
\begin{array}{l c}
\displaystyle \frac{d_\pi}{k_\pi} & \text{when $\pi=\sigma$}\\
0 & \text{otherwise}
\end{array}
\right.
\end{equation}
for $\pi, \sigma \in  \widehat{H}$. Therefore, $\norm{\chi_\pi}_2^2=d_\pi/k_\pi$. 

We will use the following  lemma in Section~\ref{s:finite-hypergroups}. Its proof is a straightforward application of  (\ref{eq:orth-of-coeficients}) and is  similar to the group case, so we omit the proof here. 

\begin{lem}\label{l:convolution-coefficients}
Let $\pi_{i,j}$ and $\sigma_{k,\ell}$ be two coefficient functions for representations $\pi, \sigma \in \widehat{H}$ for  a compact hypergroup $H$. Then
\[
\pi_{i,j} * \sigma_{k,\ell} (x) = \left\{
\begin{array}{l l}
0 & \text{if $\pi \neq \sigma$}\\
\frac{1}{k_\pi}  \pi_{i, \ell}(x) & \text{if $\pi =\sigma$}
\end{array} 
\right.
\]
Consequently,  
\[
\displaystyle  {k_\pi}  \chi_\pi *  {k_\sigma}  \chi_\sigma(x)=\left\{
\begin{array}{l l}
0 & \text{if $\pi \neq \sigma$}\\
  {k_\pi}  \;\chi_\pi(x) & \text{if $\pi =\sigma$}
\end{array} 
\right. .
\]
 \end{lem}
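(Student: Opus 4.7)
The plan is to unfold the convolution on the left-hand side and reduce it to a single integral to which the orthogonality relation (\ref{eq:orth-of-coeficients}) applies directly. First, recall that on the compact hypergroup $H$ the convolution of functions takes the form $(f*g)(x)=\int_H f(y)\,g(\check y * x)\,dy$, where $g(\check y * x):=\int_H g\,d(\delta_{\check y}*\delta_x)$ and $\check y$ denotes the hypergroup involution.

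The key algebraic step is to expand $\sigma_{k,\ell}(\check y * x)$ by means of the representation property. Because $\sigma$ is a $*$-representation of $M(H)$, we have $\sigma(\delta_{\check y}*\delta_x)=\sigma(\delta_y)^{*}\sigma(\delta_x)$, and reading the $(k,\ell)$-entry yields
\[
\sigma_{k,\ell}(\check y * x)=\sum_{m=1}^{d_\sigma}\overline{\sigma_{m,k}(y)}\,\sigma_{m,\ell}(x).
\]
Substituting this into the defining integral and interchanging the finite sum with the integral, we obtain
\[
\pi_{i,j}*\sigma_{k,\ell}(x)=\sum_{m=1}^{d_\sigma}\sigma_{m,\ell}(x)\int_H \pi_{i,j}(y)\,\overline{\sigma_{m,k}(y)}\,dy.
\]
Now (\ref{eq:orth-of-coeficients}) kills every term except when $\pi=\sigma$, $m=i$, and $j=k$; in that surviving case the integral equals $1/k_\pi$, so the sum collapses to $\tfrac{1}{k_\pi}\pi_{i,\ell}(x)$. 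This establishes the first formula.

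For the character identity, substitute $\chi_\pi=\sum_i \pi_{i,i}$ and $\chi_\sigma=\sum_k \sigma_{k,k}$ and apply the formula just proved: of the $d_\pi d_\sigma$ cross terms, the ones that survive are exactly those with $\pi=\sigma$ and $i=k$, and summing gives $k_\pi k_\sigma \sum_i \tfrac{1}{k_\pi}\pi_{i,i} = k_\pi\chi_\pi$.

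The only subtle point is justifying the identity $\sigma(\delta_{\check y})=\sigma(\delta_y)^{*}$, which is transported from the $*$-structure on $M(H)$ via the hypergroup involution; once this is recorded, the rest is the same bookkeeping as in the classical compact-group Peter--Weyl argument. I expect no genuine obstacle beyond being careful that the Fubini-type interchange is legal (which is immediate here since the sum is finite and $\pi_{i,j}$, $\sigma_{m,k}$ are continuous on the compact set $H$).
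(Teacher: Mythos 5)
Your argument is correct and is precisely the ``straightforward application of (\ref{eq:orth-of-coeficients}), similar to the group case'' that the paper invokes while omitting the proof: expand $\sigma_{k,\ell}(\tilde y * x)$ as the $(k,\ell)$-entry of $\sigma(\delta_y)^{*}\sigma(\delta_x)$ and integrate term by term against the orthogonality relation. One remark worth keeping: your computation actually yields $\pi_{i,j}*\sigma_{k,\ell}=\delta_{j,k}\,k_\pi^{-1}\pi_{i,\ell}$ when $\pi=\sigma$ (so the convolution also vanishes when $j\neq k$), and this Kronecker delta --- missing from the displayed statement of the lemma --- is exactly what makes your final step $\chi_\pi*\chi_\pi=k_\pi^{-1}\chi_\pi$ collapse correctly, so your proof in fact establishes the (corrected) statement.
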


For each $\pi \in \widehat{H}$, define $\widehat{f}(\pi)$ to be the matrix $[ \langle f, \pi_{i,j}\rangle ]_{ij=1}^{d_\pi}$ where $\langle \cdot, \cdot \rangle$ is the inner product of $L^2(H)$. 
For each $f\in L^2(H)$,   applying the Fourier transform, we have
\begin{equation}\label{eq:Fourier-transform}
f=\sum_{\pi\in \widehat{H}} k_\pi \sum_{i,j=1}^{d_\pi}  \widehat{f}( \pi)_{i,j}   \pi_{i,j}
\end{equation}
 and the series converges in $L^2(H)$. Hence,
\begin{equation}\label{eq:norm-2}
\norm{f}_2^2 =  \sum_{\pi \in \widehat{H}} k_\pi \sum_{i,j=1}^{d_\pi}  |\widehat{f}(\pi) _{i,j}|^2  =  \sum_{\pi \in \widehat{H}}   k_\pi \norm{ \widehat{f}(\pi )}_2^2 
\end{equation}
for every $f\in L^2(H)$.

In particular if $H$ is commutative,  because every representation is $1$ dimensional, (\ref{eq:Fourier-transform}) is re-written as
\begin{equation}\label{eq:Fourier-commutative}
f=\sum_{\chi\in \widehat{H}}   k_\chi \widehat{f}(\chi) \chi,
\end{equation}
for $\widehat{f}(\chi)= \langle f, {\chi }\rangle$.
Therefore,  $(\sqrt{k_\chi} \chi)$ forms an orthonormal basis for $L^2(H)$, by the orthogonality relation (\ref{eq:characters-on-compact-hypergroups}). Further, $(k_\chi \chi)_{\chi \in \widehat{H}}$   forms the set of all minimal projections of $L^1(H)$, by Lemma~\ref{l:convolution-coefficients}.
It is known that for every compact commutative hypergroup $H$, there is a measure $\varpi$ on $\widehat{H}$ with respect to that, the restriction of the Fourier transform to $ L^2(H)$ forms an isometry onto $\ell^2(\widehat{H}, \varpi)$ (see \cite[Section~2.2]{bl}). The measure $\varpi$ is called the \ma{Plancherel measure} on $\widehat{H}$.

\begin{proposition}\label{p:Plancherel}
Let $H$ be a commutative compact hypergroup. Then the Plancherel measure $\varpi$ on every $\pi \in \widehat{H}$ is equal to $k_\pi$.
\end{proposition}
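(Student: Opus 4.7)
The plan is to compare the norm identity (\ref{eq:norm-2}) with the defining property of the Plancherel measure. Since $H$ is commutative, every irreducible representation is one-dimensional, and (\ref{eq:norm-2}) simplifies to
\[
\norm{f}_2^2 \;=\; \sum_{\chi \in \widehat{H}} k_\chi \,|\widehat{f}(\chi)|^2
\]
for every $f \in L^2(H)$. On the other hand, by the very definition of the Plancherel measure $\varpi$ recalled from \cite[Section~2.2]{bl}, the Fourier transform extends to an isometric isomorphism $L^2(H) \to \ell^2(\widehat{H}, \varpi)$, and since $\widehat{H}$ carries the discrete topology the Plancherel formula reads
\[
\norm{f}_2^2 \;=\; \sum_{\chi \in \widehat{H}} \varpi(\{\chi\})\, |\widehat{f}(\chi)|^2.
\]

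To isolate $\varpi(\{\chi_0\})$ for a fixed $\chi_0 \in \widehat{H}$, I would test both identities on the function $f = \chi_0$. Using the orthogonality relation (\ref{eq:orth-of-coeficients}) specialised to one-dimensional representations, one has $\widehat{\chi_0}(\chi) = \delta_{\chi,\chi_0}/k_{\chi_0}$ and $\norm{\chi_0}_2^2 = 1/k_{\chi_0}$. Substituting into the Plancherel formula above gives $1/k_{\chi_0} = \varpi(\{\chi_0\})/k_{\chi_0}^2$, whence $\varpi(\{\chi_0\}) = k_{\chi_0}$, as required. (Equivalently, since the family $(\sqrt{k_\chi}\,\chi)_{\chi\in\widehat{H}}$ is an orthonormal basis of $L^2(H)$, the two quadratic forms in $(\widehat{f}(\chi))_\chi$ above must agree coefficient by coefficient, yielding the same conclusion without a test function.)

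The only real obstacle is conceptual rather than computational: one must check that the Fourier transform appearing in (\ref{eq:norm-2}) coincides with the one implicit in the Plancherel theorem cited from \cite{bl}. But both are defined on $L^1(H) \cap L^2(H)$ by integrating against the characters with respect to the normalised Haar measure $\lambda_H$, so they are literally the same map, and no scaling discrepancy can enter. Hence the identity $\varpi(\{\pi\}) = k_\pi$ follows cleanly for every $\pi \in \widehat{H}$.
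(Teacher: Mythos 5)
Your proposal is correct and follows essentially the same route as the paper: the paper's proof also evaluates the Plancherel isometry on the single character $\psi$, using $k_\psi\widehat{\psi}=\delta_\psi$ and $\norm{\psi}_2^2=1/k_\psi$ from the orthogonality relations to conclude $\varpi(\psi)=k_\psi^2\norm{\psi}_2^2=k_\psi$. No substantive difference.
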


\begin{proof}
 By  (\ref{eq:characters-on-compact-hypergroups}),   one easily gets $k_\psi \widehat{\psi} =  \delta_\psi$  for each $  \psi\in \widehat{H}$ where $\delta_\psi$ is the point-mass function on $\psi$ whose value is $1$ at $\psi$ and zero everywhere else.  
Hence, for a fixed $\psi \in \widehat{H}$, we get
\begin{eqnarray*}
\varpi(\psi) &=&   \sum_{\chi\in \widehat{H}} \delta_\psi(\chi) \varpi(\chi) = \sum_{\chi\in \widehat{H}} |\delta_\psi(\chi)|^2 \varpi(\chi)\\
&=&  {k_\psi^2} \sum_{\chi\in \widehat{H}} |\widehat{\psi}(\chi)|^2 \varpi(\chi)
=  {k_\psi^2}  \int_H |{\psi}(x)|^2 d\lambda_H(x) =  {k_\psi^2}  \norm{\psi}_2^2 =  {k_\psi}.
\end{eqnarray*}
Note that the first equation in the second line is based on the definition of the Plancherel measure. 
\end{proof}

\begin{eg}\label{eg:G-as-a-compact-hypergroup}
Let $G$ be a compact group. Obviously, $G$ is a compact hypergroup.  Readily based on the Peter-Weyl orthogonality relation for compact groups, $k_\pi=d_\pi$ for every $\pi \in \widehat{G}$.  Hence, for a commutative compact group $G$,  the Plancherel measure on $\widehat{G}$ is constantly $1$.
\end{eg}

Let $K$ be a compact subhypergroup of a commutative   hypergroup $H$.  Then  $K/H$, the set of all  cosets of $K$ in $H$ equipped with the quotient topology through the mapping $p_K:H\rightarrow K/H$, where $p_K(x)=xK$, forms a commutative hypergroup.     Further, if $H$ is compact, so is $H/K$.
The first part of the following corollary was proved in \cite[Proposition~2.2.46]{bl}  (for not necessarily compact hypergroups).   Here, we  let $\widehat{H}_K$ denote the set of all characters of $H$, say $\chi$, such that $\chi(x)=1$ for all $x\in K$.

\begin{cor}\label{t:^H//K}
Let $K$ be a closed subhypergroup of a compact commutative hypergroup $H$. Then the mapping   $\Phi: \widehat{H}_K \rightarrow \widehat{H/K}$ $(\chi \mapsto \chi \circ p_K$) is a bijection such that $k_{\pi}=k_{\Phi(\chi)}$ for all $\chi\in \widehat{H}_K$.
\end{cor}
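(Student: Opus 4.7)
The plan is to invoke \cite[Proposition~2.2.46]{bl} for the bijectivity of $\Phi$, and then to establish the equality of hyperdimensions by directly relating the $L^2$-norms of $\chi$ and $\Phi(\chi)$ on their respective hypergroups. Since the Bloom--Heyer result already furnishes the factorization $\chi = \Phi(\chi) \circ p_K$ for each $\chi \in \widehat{H}_K$, little needs to be said about the bijection beyond a citation; the work lies entirely in the hyperdimension identity.

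The main step is to specialize the character orthogonality relation (\ref{eq:characters-on-compact-hypergroups}) to the one-dimensional setting available on a commutative compact hypergroup. This yields
$$\frac{1}{k_\chi} = \norm{\chi}_2^2 = \int_H |\chi(x)|^2 \, d\lambda_H(x),$$
together with the analogous identity $1/k_{\Phi(\chi)} = \norm{\Phi(\chi)}_2^2$, the latter computed against the normalized Haar measure $\lambda_{H/K}$ on the quotient hypergroup.

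To bridge the two integrals, I would invoke the standard hypergroup fact that $\lambda_{H/K}$ is the pushforward of $\lambda_H$ under the quotient map $p_K$ (both being normalized probability measures on compact spaces). Transferring the continuous function $|\Phi(\chi)|^2$ on $H/K$ back to $H$ through $p_K$ and using the factorization above gives
$$\int_{H/K} |\Phi(\chi)(xK)|^2 \, d\lambda_{H/K}(xK) = \int_H |\Phi(\chi)(p_K(x))|^2 \, d\lambda_H(x) = \int_H |\chi(x)|^2 \, d\lambda_H(x),$$
from which $k_\chi = k_{\Phi(\chi)}$ follows immediately. The only genuine obstacle is verifying that the normalizations of $\lambda_H$ and $\lambda_{H/K}$ are exactly compatible with the pushforward—a Weil-type integration formula in the hypergroup setting—but this is retrievable from \cite[Section~2.2]{bl} and is the standard convention for quotient Haar measures on compact commutative hypergroups.
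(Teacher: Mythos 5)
Your proposal is correct and follows essentially the same route as the paper: the bijection is delegated to \cite[Proposition~2.2.46]{bl}, and the identity $k_\chi = k_{\Phi(\chi)}$ is obtained by equating $1/k_{\Phi(\chi)} = \int_{H/K}|\Phi(\chi)|^2\,d\lambda_{H/K}$ with $\int_H |\chi|^2\,d\lambda_H = 1/k_\chi$ via the Weil-type quotient integration formula (the paper cites \cite[Theorem~1.5.20]{bl} for exactly this step). No substantive difference.
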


\begin{proof}
Here we just show the equality  of the hyperdimenstions. To do so,  by \cite[Theorem~1.5.20]{bl}, we have
\begin{eqnarray*}
\frac{1}{k_{\Phi(\chi)}} &=& \int_{H/K} | \Phi(\chi)(xK)|^2 d\lambda_{H/K}(xK) =  \int_{H} | \chi(x)|^2 d\lambda_H(x) = \frac{1}{k_\chi}
\end{eqnarray*}
for $\chi \in \widehat{H}_K$.
\end{proof}

\end{section}

\section{Two classes of compact hypergroups}\label{s:classes-compact-hypergroups}

\begin{subsection}{Conjugacy classes of a compact group}\label{ss:dual-of-conj(G)}

Let $G$ be a compact group and $\conj(G)$ denote the set of all conjugacy classes of $G$. Here for each $x\in G$, we use $C_x$ to denote the conjugacy class of $x$ that is $\{ yxy^{-1}:  y\in G\}$. For each pair $x, y \in G$, the convolution $*$ defined by
\begin{equation}\label{eq:conv-conj(G)}
\delta_{C_x} * \delta_{C_y} = \int_G \int_G \delta_{C_{sxs^{-1}tyt^{-1}}} ds dt
\end{equation}
forms a hypergroup action on $\conj(G)$ when $\conj(G)$ is equipped with the quotient topology carried through the canonical mapping $x \mapsto C_x$.
A function $f\in C(G)$ is called a \ma{class function} if it is invariant on conjugacy classes of $G$. A class function $f \in C(G)$ can be canonically considered as a continuous function on $\conj(G)$.  The Haar measure of $\conj(G)$, denoted by $\lambda_{\conj(G)}$, is characterized as the measure on $\conj(G)$  for that
\begin{equation}\label{eq:Haar-Conj(G)}
\int_G f(x) d\lambda_G(x) = \int_{\conj(G)} f(C_x) d\lambda_{\conj(G)}(C_x)
\end{equation}
for every class function $f$ on $G$.    Note that $\conj(G)$ is a commutative hypergroup, so for each $\chi \in \widehat{\conj}(G)$, $d_\chi=1$.

This class of hypergroups fall into a larger class of commutative hypergroups, called \ma{orbit hypergroups}. Orbit hypergroups are admitted by  [FIA$\overline{]}^B$ locally compact groups $G$ where $B$ is a relatively compact group of automorphisms of $G$ including all inner automorphism. For a detailed reference on this class of hypergroups look at  \cite[8.1]{je}. In \cite{center}, it was shown that the dual object of these commutative hypergroups can be identified with
the set of all $B$-characters on $G$ defined and studied formerly by Mosak \cite{mosak}. This generalizes the first part of the following theorem.

\begin{theorem}\label{t:characters-Conj(G)}
Let $G$ be a compact group and $\conj(G)$ denotes the hypergroup of conjugacy classes of $G$. Then the mapping $\pi \mapsto d_\pi^{-1} \chi_\pi$ is a bijection from $\widehat{G}$ onto $\widehat{\conj}(G)$. Further, for each $\psi \in \widehat{\conj}(G)$, $k_\psi= d_\pi^2$ for $\psi= d_\pi^{-1} \chi_\pi$.
\end{theorem}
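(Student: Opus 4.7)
The plan is to verify first that $\psi_\pi := d_\pi^{-1}\chi_\pi$, regarded as a function on $\conj(G)$ via the fact that $\chi_\pi$ is a class function on $G$, is a hypergroup character. The key tool is Schur's lemma: for each irreducible $\pi\in\widehat{G}$ and $x\in G$ the operator $\int_G \pi(sxs^{-1})\,ds$ commutes with $\pi$, so by irreducibility it equals $(\chi_\pi(x)/d_\pi)\,I$. Applying this twice and taking traces gives
\[
\int_G\!\int_G \chi_\pi(sxs^{-1}tyt^{-1})\,ds\,dt=\frac{\chi_\pi(x)\,\chi_\pi(y)}{d_\pi},
\]
which once divided by $d_\pi$ exhibits the multiplicativity $\psi_\pi(\delta_{C_x}\ast\delta_{C_y})=\psi_\pi(C_x)\psi_\pi(C_y)$ against the convolution formula (\ref{eq:conv-conj(G)}). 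The normalization $\psi_\pi(e)=1$ is immediate.

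Next, I would establish the bijection $\pi\mapsto \psi_\pi$. Injectivity is immediate from the linear independence of the irreducible characters $\chi_\pi$. For surjectivity, take $\psi\in\widehat{\conj}(G)$, lift it to a continuous class function $\tilde\psi$ on $G$, and expand it in the Peter--Weyl orthonormal basis $(\chi_\pi)_{\pi\in\widehat{G}}$ of the subspace of class functions of $L^2(G)$, writing $\tilde\psi=\sum_\pi a_\pi \chi_\pi$. Substituting this expansion into the multiplicativity identity $\int_G\!\int_G \tilde\psi(sxs^{-1}tyt^{-1})\,ds\,dt=\tilde\psi(x)\tilde\psi(y)$ and invoking the Schur computation above produces the identity
\[
\sum_{\pi}\frac{a_\pi}{d_\pi}\chi_\pi(x)\chi_\pi(y)=\sum_{\pi,\sigma}a_\pi a_\sigma \chi_\pi(x)\chi_\sigma(y)
\]
in $L^2(G\times G)$. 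Matching coefficients against the orthonormal system $\{\chi_\pi\otimes\chi_\sigma\}$ forces $a_\pi a_\sigma=0$ for $\pi\neq\sigma$ and $a_\pi^2=a_\pi/d_\pi$, while $\psi(e)=1$ excludes the zero solution; thus exactly one $a_\pi$ is nonzero and equals $d_\pi^{-1}$, giving $\psi=\psi_\pi$.

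Finally, I would compute $k_\psi$ for $\psi=\psi_\pi$. Since $\conj(G)$ is commutative, each $\psi\in\widehat{\conj}(G)$ is one--dimensional, so the orthogonality relation (\ref{eq:characters-on-compact-hypergroups}) specializes to $\|\psi\|_2^2=1/k_\psi$. The squared $L^2$-norm is $d_\pi^{-2}\int_{\conj(G)}|\chi_\pi(C_x)|^2\,d\lambda_{\conj(G)}(C_x)$, and because $|\chi_\pi|^2$ is a class function, the invariance formula (\ref{eq:Haar-Conj(G)}) rewrites this as $d_\pi^{-2}\int_G |\chi_\pi(x)|^2\,d\lambda_G(x)=d_\pi^{-2}$ by the classical Peter--Weyl orthogonality on $G$. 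Hence $k_\psi=d_\pi^2$.

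The main obstacle I anticipate is the surjectivity step: all other parts reduce to symbolic manipulations with Schur's lemma and the descent of class functions, whereas surjectivity demands a legitimate $L^2$-expansion on $G$, an interchange of sum and double integral, and coefficient matching in $L^2(G\times G)$. Once that step is handled, the hyperdimension computation is a one-line consequence of (\ref{eq:Haar-Conj(G)}) and the group Peter--Weyl relation.
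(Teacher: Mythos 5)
Your proposal is correct, but it takes a genuinely different route from the paper for the bijection. The paper does not prove the bijection directly: it invokes the known identification (via Ross's work on centers of hypergroups and Mosak's $B$-characters, i.e.\ the orbit-hypergroup machinery cited as \cite{center} and \cite{mosak}) of $\widehat{\conj}(G)$ with the set of conjugation-invariant characters of $G$, and then only carries out the hyperdimension computation. You instead give a self-contained argument: Schur's lemma yields $\int_G \pi(sxs^{-1})\,ds=(\chi_\pi(x)/d_\pi)I$ and hence the multiplicativity of $\psi_\pi=d_\pi^{-1}\chi_\pi$ against the convolution (\ref{eq:conv-conj(G)}); for surjectivity you expand a lifted character in the Peter--Weyl basis of class functions and match coefficients in $L^2(G\times G)$, getting $a_\pi a_\sigma=0$ ($\pi\neq\sigma$) and $a_\pi^2=a_\pi/d_\pi$, which together with $\psi(e)=1$ pins down $\psi=\psi_\pi$. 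This is sound: the interchange of sum and double integral you flag is justified because $f\mapsto\int_G\int_G f(sxs^{-1}tyt^{-1})\,ds\,dt$ is a contraction from $L^2(G)$ into $L^2(G\times G)$ by Jensen's inequality and translation invariance of the Haar measure. (A slicker surjectivity argument, closer in spirit to what the authors originally drafted, is to note that once the $\psi_\pi$ are known to be characters, any further $\psi\in\widehat{\conj}(G)$ would be orthogonal to every $\chi_\pi$ by (\ref{eq:characters-on-compact-hypergroups}) and hence zero, since the irreducible characters span the class functions in $L^2(G)$.) What your approach buys is independence from the orbit-hypergroup literature; what the paper's approach buys is brevity and a statement that is a special case of a more general known result. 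Your computation of $k_\psi$ via (\ref{eq:characters-on-compact-hypergroups}) and (\ref{eq:Haar-Conj(G)}) is identical to the paper's.
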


\begin{proof}
\ignore{
First note that for each $\pi \in \widehat{G}$, $d_\pi^{-1} \chi_\pi$ is a class function. Hence, for each $x,y$  in $G$, we get
\begin{eqnarray*}
d_\pi^{-1}  \chi_\pi(\delta_{C_x} * \delta_{C_y})
&=&  \int_{G} \int_{G} d_\pi^{-1} \chi_\pi(sxs^{-1}tyt^{-1}) d\lambda_G(s) d\lambda_G(t)\\
&=& d_\pi^{-1} \sum_{i,j=1}^{d_\pi}  \int_G \int_G \pi_{i,j}(sxs^{-1}) \pi_{j,i}(tyt^{-1}) d\lambda_G(s) d\lambda_G(t)\\
&=& d_\pi^{-1} \sum_{i,j,l,k,m=1}^{d_\pi}  \langle  \pi_{i,l} , {\pi_{j,k}}\rangle \; \langle \pi_{j,m}, {\pi_{i,n} } \rangle \;  \pi_{l,k}(x) \pi_{m,n}(y) \ \ \ (\sharp)\\
&=& d_\pi^{-2} \sum_{i,j=1}^{d_\pi}  \pi_{i,i}(x) \pi_{j,j}(y)  = d_\pi^{-1} \chi_\pi(C_x) \; d_\pi^{-1} \chi_\pi(C_y).
\end{eqnarray*}
Note that $(\sharp)$ is based on orthogonality relation of coefficients of $G$.
Therefore, $d_\pi^{-1} \chi_\pi$ belongs to $\widehat{\conj}(G)$.

Conversely,  let $\psi \in \widehat{\conj}(G)$.
Towards a contradiction assume that $\psi$ is equal to no $d_\pi^{-1} \chi_\pi$ for $\pi \in \widehat{G}$.
Note that $\psi$ is also a class function on $G$, so for every $\pi \in \widehat{G}$, by (\ref{eq:characters-on-compact-hypergroups}), we get
\[
d_\pi^{-1} \int_{G} \psi(x) \overline{\chi_\pi(x)} dx = \int_{\conj(G)} \psi(x) \overline{d_\pi^{-1} \chi_\pi(x)} dx=0
\]
This implies that $\psi =0$, as the linear span of  characters of $G$ is dense in the subspace of $L^2(G)$ of all class functions.
}
As we mentioned before, by \cite{center} and \cite{mosak}, the dual object of $\conj(G)$ is identified with the set of all characters of $G$ which are invariant under the conjugations of all inner automorphisms. But the former set is the set of all characters constructed by irreducible representations of $G$. Hence, there is a bijection from $\widehat{G}$ onto $\widehat{\conj}(G)$  through the mapping  $\pi \mapsto \psi_\pi$ where $\psi_\pi(C_x):= d_\pi^{-1} \chi_\pi(x)$ for every conjugacy class $C_x \in \conj(G)$.

Since $\conj(G)$ is commutative, $d_{\psi }=1$ for all $\psi\in \widehat{\conj}(G)$. Hence, for each $\pi \in \widehat{G}$ by applying (\ref{eq:characters-on-compact-hypergroups}), we have
\[
\frac{1}{k_{\psi_\pi}}=\int_{\conj(G)}  |\psi_\pi(C)|^2 d\lambda_{\conj(G)}(C)= \frac{1}{d_\pi^2} \int_G |\chi_\pi(x)|^2 dx = \frac{1}{d_\pi^2}.
\]
\end{proof}

\begin{eg}\label{eg:Z2-rtimes-T}
Let $\Bbb{T}$ denote the  compact group of $\{ x\in \mathbb{C}: x \overline{x}=1\}$ and $\mathbb{Z}_2=\{1,-1\}$.
Therefore for each $\alpha \in \Bbb{Z}_2$, $x \mapsto x^\alpha$ forms a group automorphism on  $\Bbb{T}$. We define $G$ to be the semidirect product of $\Bbb{T}\rtimes \Bbb{Z}_2$ with respect to this action. One simple computation implies that $\conj(G)$ is decomposed into three classes of elements, namely,  $C_{(1,1)}=\{(1,1)\}$, $C_{(x,-1)}=\{(y,-1): y\in \Bbb{T}\}$, and $C_{(x,1)}=\{(x,1), (\overline{x},1)\}$ for all $x\in \Bbb{T}$. The irreducible representations of ${G}$ are constructed by induction  (see \cite[Theorem~6.42]{fo}). There are  two dimensional representations $\pi_n$ for $n\neq 0$ induced from $\Bbb{T}$ into $G$ and two linear representations $\chi_1$ and $\chi_{-1}$ as extensions of (linear) representations of $\Bbb{Z}_2$. Therefore, $k_{\chi_\pi}=4$ for all non-linear characters $\chi_\pi$ associated to representations $\pi$ of $G$ while $k_{\chi_{\pm 1}}=1$ for  the linear representations $\chi_{\pm  1}$.
\end{eg}

\begin{eg}\label{eg:SU(2)}
Let $\SU(2)$ denote the compact group of $2\times 2$ special unitary matrices.  It is straightforward that each conjugacy class  of $\SU(2)$ except $I$ and $-I$ intersects the maximal tori of $\SU(2)$ twice.  Therefore, one may represent $\conj(\SU(2))$ by $[0,\pi]$ (half of the tori). The representation theory of $\SU(2)$ is very well known, for example look at \cite[Theorem~5.39]{fo}. The dual space $\widehat{\SU}(2)$ is represented by $\{ \pi_n: n = 0, 1,2,\ldots\}$ where for each $n$, $\pi_n$ is of dimension $n+1$. Also the character $\chi_n$, constructed by the trace of $\pi_n$, is computed on $\theta \in [0,\pi]$ by
\[
\chi_n(\theta)= \frac{\sin((n+1)\theta)}{\sin(\theta)}.
\]
Therefore,  $\{\psi_n:=(n+1)^{-1}\chi_n : n=0,1,2,\ldots\}$  forms the representation theory of $\conj(\SU(2))$ as a commutative compact hypergroup, where for each $n$, $k_{\psi_n}=(n+1)^2$.
\end{eg}

\ignore{
\noindent {\bf Conjecture.}
The hypergroup algebra of $\conj(G)$ for a compact group $G$ is isometrically isomorphic to the centre of the group algebra of $G$. The centre of the group algebras have been studies with respect to its amenability properties in \cite{AzSaSp} where it was shown that $L^1(\conj(\Bbb{T}\rtimes \Bbb{Z}_2))$ is amenable while $L^1(\conj(\SU(2)))$ is not (see Examples~\ref{eg:Z2-rtimes-T} and \ref{eg:SU(2)}).  Accordingly, it is conjectured in \cite{AzSaSp} that $L^1(\conj(G))$ is amenable if and only if $\{ k_\pi: \pi \in \widehat{G}\}$ is bounded.
}

A commutative hypergroup $H$ is called a \ma{strong hypergroup} if its dual space, $\widehat{H}$, forms a hypergroup whose Haar measure corresponds to the Plancherel measure.
For a locally compact abelian group, this is always the case, but this is not true necessarily for many known examples of commutative hypergroups including many classes of polynomial hypergroups, see \cite{bl}.

The hypergroup structures on the duals of   (not necessarily compact) orbit hypergroups have been studied  in \cite{lasser-hartmann} where a generalized proof for the  following proposition is presented.  Here, to be self-contained,  we present a proof for the compact case which is shorter and  relies on the theory of compact (hyper)groups.

\begin{proposition}\label{p:duality}
Let $G$ be a compact group. Then  $\conj(G)$ and $\widehat{G}$ both form strong hypergroups and they are dual objects of each other.
\end{proposition}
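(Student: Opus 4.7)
The plan is to transport the tensor--product hypergroup structure on $\widehat{G}$ to $\widehat{\conj}(G)$ via the bijection of Theorem~\ref{t:characters-Conj(G)}, and then check that the Plancherel measure on one side agrees with the Haar measure on the other. The upshot will be that each of $\conj(G)$ and $\widehat{G}$ is strong and they form a dual pair.

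First I would equip $\widehat{G}$ with the discrete hypergroup structure
\[
\delta_\pi * \delta_\sigma \;=\; \sum_{\tau \in \widehat{G}} \frac{n_{\pi\sigma}^\tau\, d_\tau}{d_\pi d_\sigma}\, \delta_\tau,
\]
where $n_{\pi\sigma}^\tau$ denotes the multiplicity of $\tau$ in $\pi\otimes\sigma$; the involution is $\pi\mapsto\bar\pi$ and the identity is the trivial representation. The coefficients are nonnegative and, since $\chi_{\pi\otimes\sigma}(e)=d_\pi d_\sigma$, they sum to $1$. Schur's lemma gives $n_{\pi\bar\pi}^{\mathbf 1}=1$, so the coefficient of the identity in $\delta_\pi*\delta_{\bar\pi}$ equals $1/d_\pi^2$, and the standard formula for the Haar weight of a discrete hypergroup then yields $w(\pi)=d_\pi^2$. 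Under the bijection $\pi\mapsto\psi_\pi=d_\pi^{-1}\chi_\pi$, the classical linearization $\chi_\pi\chi_\sigma=\sum_\tau n_{\pi\sigma}^\tau\chi_\tau$ translates into
\[
\psi_\pi\cdot\psi_\sigma \;=\; \sum_\tau \frac{n_{\pi\sigma}^\tau\, d_\tau}{d_\pi d_\sigma}\,\psi_\tau,
\]
so the pointwise product of characters on $\conj(G)$ matches this convolution, endowing $\widehat{\conj}(G)$ with a hypergroup structure isomorphic to $\widehat{G}$. By Proposition~\ref{p:Plancherel} and Theorem~\ref{t:characters-Conj(G)}, the Plancherel measure on $\widehat{\conj}(G)$ assigns $k_{\psi_\pi}=d_\pi^2$ to $\psi_\pi$, matching $w(\pi)$; hence $\conj(G)$ is strong, with dual hypergroup $\widehat{G}$.

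For the reverse direction I would, for each $x\in G$, define $\phi_x:\widehat{G}\to\mathbb{C}$ by $\phi_x(\pi)=\psi_\pi(C_x)$, verify that $\phi_x$ is a character of the discrete hypergroup $\widehat{G}$---again via $\chi_\pi\chi_\sigma=\sum_\tau n_{\pi\sigma}^\tau\chi_\tau$---and note that $C_x\mapsto\phi_x$ is injective, since class functions separate conjugacy classes. The main obstacle is surjectivity, and I would handle it by Gelfand theory: the linear span of $\{d_\pi^{-1}\chi_\pi:\pi\in\widehat{G}\}$ is a unital self--adjoint subalgebra of $C(\conj(G))$ separating points, hence uniformly dense by Stone--Weierstrass; any continuous character of $\widehat{G}$ then extends to a multiplicative functional on $C(\conj(G))$, which by Gelfand's theorem must be point evaluation at some $C_x$.

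Finally, a Plancherel calculation closes the loop: using $\|\delta_\pi\|_{\ell^2(\widehat{G},w)}^2=d_\pi^2$ together with the orthogonality relation (\ref{eq:characters-on-compact-hypergroups}), one checks directly that the Plancherel measure on $\widehat{\widehat{G}}\cong\conj(G)$ is the normalized Haar measure of $\conj(G)$, so $\widehat{G}$ is also strong and the duality is complete.
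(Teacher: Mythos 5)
Your first half is essentially the paper's own argument, made more explicit: you put the fusion-rule convolution on $\widehat{G}$, read off the Haar weight $w(\pi)=d_\pi^2$ from $n_{\pi\bar\pi}^{\mathbf 1}=1$, transport the structure to $\widehat{\conj}(G)$ via $\pi\mapsto\psi_\pi$, and match $w(\pi)$ with the Plancherel weight $k_{\psi_\pi}=d_\pi^2$ from Theorem~\ref{t:characters-Conj(G)}. That part is correct.

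The gap is in your surjectivity step for the reverse direction. A unital, hermitian, multiplicative linear functional on a uniformly dense (but not closed) self-adjoint subalgebra of $C(X)$ need not be a point evaluation: on $C[0,1]$ the polynomials are dense by Stone--Weierstrass, yet $p\mapsto p(2)$ is multiplicative, unital and hermitian while being evaluation at no point of $[0,1]$. The missing ingredient is boundedness for the uniform norm. A character $\phi$ of the discrete hypergroup $\widehat{G}$ only gives you $|\sum_\pi c_\pi\phi(\pi)|\le\sum_\pi|c_\pi|$, i.e.\ continuity of your functional for the $\ell^1$-type norm of $\ell^1(\widehat{G},w)$ (equivalently, the norm of $ZA(G)=ZL^1(G)\cap A(G)$), not for $\bigl\|\sum_\pi c_\pi\psi_\pi\bigr\|_\infty$; since the sup norm can be much smaller than $\sum_\pi|c_\pi|$, the functional need not extend to $C(\conj(G))$, and Gelfand's theorem for $C(X)$ does not apply. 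What is actually required is that the maximal ideal space of the Banach algebra $ZA(G)$ is exactly $\conj(G)$ --- a genuine theorem, which the paper imports from \cite[34.37]{he2} (see also \cite[Proposition~3.1]{ma15}) rather than deducing from Stone--Weierstrass. The remaining pieces of your reverse direction (injectivity of $C_x\mapsto\phi_x$ because irreducible characters separate conjugacy classes, and the identification of the Plancherel measure of $\widehat{G}$ with the normalized Haar measure of $\conj(G)$, which the paper routes through $L^2(\widehat{G})\cong ZL^2(G)\cong L^2(\conj(G))$) are fine once that spectrum identification is supplied.
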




\begin{proof}
The fusion rule for compact   groups is the key point to define a hypergroup action on the dual space of irreducible representations of compact groups. See \cite[Example~1.1.14]{bl} or \cite[Section~3]{ma} for a brief summary. 
In Theorem~\ref{t:characters-Conj(G)}, we showed that the dual object of $\conj(G)$ is isomorphic to $\widehat{G}$ as two discrete sets. Also we showed that for each $\pi \in \widehat{G}$, the Plancherel measure $\varpi(\pi)=d_\pi^2$. But this matches  exactly with the hypergroup Haar measure defined on $\widehat{G}$ based on its fusion rule. 

To prove that the dual object of $\widehat{G}$ is $\conj(G)$, we need to recall that the hypergroup algebra of $\widehat{G}$ is isometrically isomorphic to a subspace $ZA(G):=ZL^1(G) \cap A(G)$ of the Fourier algebra of $G$   where $ZL^1(G)$ denotes the centre of the group algebra of $G$. (Some properties of $ZA(G)$ have been studied extensively in \cite{ma15}.)  It is proved that the maximal ideal space of $ZA(G)$ is homeomorphic to  $\conj(G)$. See \cite[34.37]{he2} or \cite[Proposition~3.1]{ma15} for a generalized proof.  Therefore the dual object of $\widehat{G}$ is homeomorphic to $\conj(G)$.

To finish the proof, we   show that the Haar measure on $\conj(G)$, (\ref{eq:Haar-Conj(G)}), corresponds to the Plancherel measure of $\widehat{G}$, denoted by $\varpi$. To do so, we use this fact that the extension of the Fourier transform on $L^2(\widehat{G})$ is an isometry onto $L^2(\conj(G), \varpi)$. Also, in the proof of \cite[Theorem~3.7]{ma}, it was shown that $L^2(\widehat{G})$ is isometrically isomorphic to $ZL^2(G)=ZL^1(G) \cap L^2(G)$. One simple averaging argument implies that $ZL^2(G)$ is also isometrically isomorphic to $L^2(\conj(G))$. This finishes the proof.
\end{proof}

\end{subsection}

\begin{subsection}{Compact  hypergroup joins}\label{ss:join-hypergroups}

In this  subsection,  we study compact hypergroups constructed by joining compact hypergroups with finite hypergroups, so called \ma{compact hypergroup joins}.  General hypergroup joins   were defined and studied extensively in  \cite{vr2}.

\begin{dfn}\label{d:join-hypergroups}
Suppose $(K,*_K)$ is a compact hypergroup and $(J,*_J)$ is a discrete hypergroup with $K\cap J=\{e\}$ where $e$ is the identity of the both hypergroups. Let $H=K\cup J$ have the unique topology for which $K$ and $J$ are closed
subspaces of $H$. Let $\lambda_K$ be the normalized Haar measure on $K$ and define
the operation $*$ on $H$ as follows:\\
\begin{itemize}
\item{If $s, t\in K$ then $\delta_t*\delta_s=\delta_t*_K \delta_s$.}
\item{If $s,t\in J$ and $s\neq \tilde{t}$ then $\delta_s*\delta_t=\delta_s*_J \delta_t$.}
\item{If $s\in K$ and $t\in J\setminus\{e\}$ then $\delta_s*\delta_t=\delta_t*\delta_s=\delta_t$.}
\item{If $s\in J$ and $\delta_s *_J \delta_{\tilde{s}} =\sum_{t\in J}\alpha_t \delta_t$,
\[
\delta_s*\delta_{\tilde{s}}=\alpha_e \lambda_K + \sum_{t\in J\setminus \{e\}} \alpha_t\delta_t.
\]}
\end{itemize}
We call $H$ the  \ma{hypergroup join} of $K$ and $J$ and write $H=K \vee J$.
\end{dfn}

 If the discrete hypergroup $J$ is finite, the  hypergroup join $K \vee J$ forms a compact hypergroup.
It should be noted that   $K\vee J$ and $J\vee K$ are not necessarily  equal for two non-equal   finite hypergroups $J$ and $K$.

The following lemma is a generalization of   \cite[Lemma~3.1 ]{vr2}.

\begin{lemma}\label{l:subset-of-^H}
Suppose $H=K\vee J$ is a compact hypergroup. Then each $\pi\in \widehat{K}\setminus \{1\}$ extends to a representation $\Phi(\pi)\in \widehat{H}$ via
\[
\Phi(\pi)(x)=\left\{
\begin{array}{l l}
\pi(x) & x\in K\\
0 & x\in J\setminus \{e\}
\end{array}
\right.\]
Also for each $\pi\in \widehat{J}$ there is some $\Phi(\pi)\in \widehat{H}$ such that
\[
\Phi(\pi)(x)=\left\{
\begin{array}{l l}
\pi(x) & x\in J\\
I_\pi & x\in K
\end{array}
\right.\]
\end{lemma}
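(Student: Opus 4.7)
My plan is to define $\Phi(\pi)$ pointwise as specified and then extend by integration to a candidate map $M(H) \to {\cal B}({\cal H}_\pi)$; the verification then has three parts---continuity of coefficient functions, multiplicativity on convolutions of point masses, and irreducibility---each of which is routine once the right input is identified. Continuity is immediate because $J\setminus\{e\}$ is an open discrete subset of $H$ while $K$ is a closed subspace, so the piecewise formula is continuous on each piece. Irreducibility of $\Phi(\pi)$ will follow from that of $\pi$: any $\Phi(\pi)$-invariant subspace is in particular invariant under the restriction $\Phi(\pi)|_K$ (respectively $\Phi(\pi)|_J$), which is just $\pi$, and hence is trivial.

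For the first claim, I would check the multiplicativity $\Phi(\pi)(\delta_s * \delta_t) = \Phi(\pi)(s)\Phi(\pi)(t)$ by running through the four clauses of Definition~3.4. The case $s,t \in K$ is just the fact that $\pi$ is a representation of $K$; for $s,t \in J$ with $s \neq \tilde t$ both sides are $0$ since $\delta_s *_J \delta_t$ is supported in $J\setminus\{e\}$; and the mixed case $s \in K$, $t \in J\setminus\{e\}$ gives $\Phi(\pi)(\delta_t) = 0 = \pi(s)\cdot 0$. The only delicate case is $t = \tilde s$ with $s \in J\setminus\{e\}$, where the convolution carries the Haar component $\alpha_e \lambda_K$. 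The key input I would use is the standard fact that
\[
\pi(\lambda_K) = \int_K \pi(x)\, d\lambda_K(x) = 0
\]
for every nontrivial irreducible representation of the compact hypergroup $K$ (this operator is the orthogonal projection onto the space of $K$-invariant vectors, which vanishes by irreducibility and nontriviality). Hence $\Phi(\pi)(\delta_s * \delta_{\tilde s}) = \alpha_e \cdot 0 + 0 = 0$, matching $\Phi(\pi)(s)\Phi(\pi)(\tilde s) = 0\cdot 0$.

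For the second claim the analysis is parallel. The case $s,t \in K$ reduces to $I_\pi = I_\pi \cdot I_\pi$ because $\delta_s *_K \delta_t$ is a probability measure on $K$ and $\Phi(\pi)|_K$ is constantly $I_\pi$; the mixed case $s \in K$, $t \in J\setminus\{e\}$ gives $\pi(t) = I_\pi \pi(t)$; and $s,t \in J$ with $s \neq \tilde t$ is immediate from $\pi$ being a representation of $J$. The only delicate case is again $t = \tilde s$ with $s \in J$, where
\[
\Phi(\pi)(\delta_s * \delta_{\tilde s}) = \alpha_e I_\pi + \sum_{u \in J\setminus\{e\}} \alpha_u \pi(u) = \alpha_e \pi(e) + \sum_{u \in J\setminus\{e\}} \alpha_u \pi(u) = \pi(\delta_s *_J \delta_{\tilde s}) = \pi(s)\pi(\tilde s);
\]
the point is that $\pi(e) = I_\pi$ aligns the Haar contribution $\alpha_e \lambda_K$ (which replaces the $J$-identity's delta mass in the join convolution) with the delta at $e$ in the $J$-convolution.

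The main obstacle is purely bookkeeping: locating the cases where the Haar measure on $K$ actually enters and invoking $\pi(\lambda_K) = 0$ in part~1 and $\pi(e) = I_\pi$ in part~2 at the right moment. Once these two observations are in hand, the rest is a mechanical case-check against Definition~3.4, and both the continuity and irreducibility of $\Phi(\pi)$ drop out for free from the corresponding properties of $\pi$.
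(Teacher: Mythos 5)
Your proposal is correct and follows essentially the same route as the paper: a case-by-case check of multiplicativity against the four clauses of the join convolution, with the two pivotal observations being $\pi(\lambda_K)=0$ for nontrivial $\pi\in\widehat{K}$ (the paper derives this from the orthogonality relation (1.1), you from the invariant-vector projection --- both standard) and $\Phi(\pi)(\lambda_K)=I_\pi=\pi(e)$ for $\pi\in\widehat{J}$. The continuity and irreducibility points are dispatched in the paper exactly as briefly as you dispatch them.
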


\begin{proof}
First let us consider the case that $\pi\in \widehat{K}\setminus \{1\}$. Clearly, for each $s,t\in K$, $\Phi(\pi)(\delta_x*\delta_y)=\Phi(\pi)(x)\Phi(\pi)(y)$, since $\Phi(\pi)|_K=\pi$.  If $x,y \in J$ and $x\neq \tilde{y}$ then $\Phi(\pi) (\delta_s*\delta_t)=\Phi(\pi)(\delta_s*_J \delta_t)=0=\Phi(\pi)(s)\Phi(\pi)(t)$.
If $s\in K$ and $t\in J\setminus\{e\}$ then $\Phi(\pi)(\delta_s*\delta_t)=\Phi(\pi)(t)=0$. If $s\in J$ and $\delta_s *_J \delta_{\tilde{s}} =\sum_{t\in J}\alpha_t \delta_t$,
\[
\Phi(\pi) (\delta_s*\delta_{\tilde{s}})=\alpha_e \pi(\lambda_K) + \sum_{t\in J\setminus \{e\}} \alpha_t \pi(t).
\]
Here, note that for each $1\neq \pi \in \widehat{K}$, $\pi(\lambda_K)=0$ by (\ref{eq:orth-of-coeficients}).
Hence, $\Phi(\pi)(\delta_s*\delta_{\tilde{s}})=0=\Phi(\pi)(s)\Phi(\pi)(\tilde{s})$.
Moreover, clearly $\Phi(\pi)$ is an $*$-continuous irreducible representation as is $\pi$.

Now let $\pi\in \widehat{J}$.  If $s, t\in K$ then $\Phi(\pi)(\delta_t*\delta_s)=I_\pi=\Phi(\pi)(t)\Phi(\pi)(s)$. If $s,t\in J$ and $s\neq \tilde{t}$ then $\Phi(\pi)(\delta_s*\delta_t)=\Phi(\pi)(s) \Phi(\pi)(t)$ as $\Phi(\pi)|_J=\pi$. If $s\in K$ and $t\in J\setminus\{e\}$ then $\Phi(\pi)(\delta_s*\delta_t)=\pi(t)=\Phi(\pi)(s)\Phi(\pi)(t)$. And eventually, if $s\in J$ and $\delta_s *_J \delta_{\tilde{s}} =\sum_{t\in J}\alpha_t \delta_t$,
\[
\Phi(\pi)(\delta_s*\delta_{\tilde{s}})=\alpha_e \Phi(\pi)(\lambda_K) + \sum_{t\in J\setminus \{e\}} \alpha_t\pi(t).
\]
Note that $\Phi(\pi)(\lambda_K)=I_\pi=\pi(e)$ since $\lambda_K$ is the normalized Haar measure on $K$; therefore, $\Phi(\pi)(\delta_s*\delta_{\tilde{s}})= \pi(\delta_s*\delta_{\tilde{s}})=\pi(s)\pi(\tilde{s})=\Phi(\pi)(s)\Phi(\pi)(\tilde{s})$.
Similarly, $\Phi(\pi)$ is an $*$-continuous irreducible representation as is $\pi$.
\end{proof}

\begin{rem}
The proof of Lemma~\ref{l:subset-of-^H} implies that \cite[Lemma~3.1]{vr2} cannot be accurate since Vrem has not excluded the trivial representation of $\widehat{H}$. One may note that in the proof of Lemma~3.1, he assumed that $\int_L \chi(x)dx=0$ which is not precise regarding  the trivial character $\chi\equiv 1$.  Consequently, \cite[Theorem~3.2]{vr2} should be slightly modified correspondingly. Ironically, Vrem has considered the redundant of the identity for the dual case in Theorem~3.3 in \cite{vr2}.
\end{rem}

 For the rest of this subsection,  let us assume that the Haar measure of $H$, $\lambda_H$, is  normalized  and the Haar measure of $J$,   $\lambda_J$, is so that   $\lambda_J(e)=1$. Recall that since $J$ is finite $\lambda_J(J) <\infty$.  The following theorem is the main result of this subsection.

\begin{theorem}\label{t:^H=^K-&^J}
Let $H=K\vee J$ be a compact hypergroup and $J\neq \{e\}$. Then there is a bijection $\Phi$  from $ (\widehat{K}\setminus\{1\} ) \cup\widehat{J}$ onto $\widehat{H}$. Moreover, for each $\pi \in  (\widehat{K}\setminus\{1\}) \cup\widehat{J}$, $d_{\Phi(\pi)}=d_\pi$ and
\[
k_{\Phi(\pi)}= \left\{
\begin{array}{l l}
k_\pi \lambda_J(J) & \pi \in \widehat{K}\setminus \{1\}\\
k_\pi & \pi \in \widehat{J}
\end{array}
\right. .
\]
\end{theorem}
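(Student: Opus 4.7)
The plan is to establish the bijection $\Phi$ from Lemma~\ref{l:subset-of-^H} by proving injectivity and surjectivity, then compute hyperdimensions via the orthogonality relation~(\ref{eq:orth-of-coeficients}). Injectivity is immediate: each of the two families $\{\Phi(\pi):\pi\in\widehat{K}\setminus\{1\}\}$ and $\{\Phi(\pi):\pi\in\widehat{J}\}$ is determined by its restriction to $K$ (respectively $J$), and the two families have disjoint images because for $\pi\in\widehat{K}\setminus\{1\}$ the restriction $\Phi(\pi)|_K=\pi$ is not constantly $I$, whereas $\Phi(\sigma)|_K\equiv I$ for every $\sigma\in\widehat{J}$. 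The equality $d_{\Phi(\pi)}=d_\pi$ is built into the construction.

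For surjectivity, I would fix $\rho\in\widehat{H}$ and study the operator $P:=\rho(\lambda_K)$. Because $\lambda_K*\lambda_K=\lambda_K$ and $\lambda_K^{\ast}=\lambda_K$, $P$ is a self-adjoint idempotent, i.e.\ an orthogonal projection. The join rules yield $\delta_s*\lambda_K=\lambda_K=\lambda_K*\delta_s$ for $s\in K$ and $\delta_t*\lambda_K=\delta_t=\lambda_K*\delta_t$ for $t\in J\setminus\{e\}$, so applying $\rho$ gives $\rho(s)P=P\rho(s)=P$ for $s\in K$ and $\rho(t)P=P\rho(t)=\rho(t)$ for $t\in J\setminus\{e\}$. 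Hence $\mathrm{Im}(P)$ is $\rho$-invariant, and by irreducibility $P\in\{0,I\}$. If $P=I$, then $\rho(s)=I$ for every $s\in K$ and every $\rho|_J$-invariant subspace is automatically $\rho$-invariant, forcing $\rho|_J\in\widehat{J}$ and $\rho=\Phi(\rho|_J)$. If $P=0$, then $\rho(t)=0$ for every $t\in J\setminus\{e\}$, so every $\rho|_K$-invariant subspace is $\rho$-invariant; hence $\rho|_K$ is irreducible and nontrivial (otherwise $P=\rho(\lambda_K)=1\neq 0$), giving $\rho=\Phi(\rho|_K)$ with $\rho|_K\in\widehat{K}\setminus\{1\}$.

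For the hyperdimensions, I would first identify the normalized Haar measure on $H$ as
\[
\lambda_H=\tfrac{1}{\lambda_J(J)}\bigl(\lambda_K+\lambda_J-\delta_e\bigr),
\]
checking translation invariance against $\delta_s$ for $s\in K$ and $s\in J\setminus\{e\}$ using the join rules, where the identity $\omega_J(\tilde{s})\alpha_e=1$ from the Haar property on $J$ is exactly what makes the $\lambda_K$-component of $(\lambda_J-\delta_e)*\delta_s$ cancel correctly. Then evaluating $1/k_{\Phi(\pi)}=\int_H|\Phi(\pi)_{i,i}|^2\,d\lambda_H$ on a diagonal coefficient splits into the two cases: for $\pi\in\widehat{K}\setminus\{1\}$ the coefficient vanishes on $J\setminus\{e\}$ while its value $1$ at $e$ cancels with $-\delta_e$, leaving $(1/\lambda_J(J))(1/k_\pi)$ and hence $k_{\Phi(\pi)}=k_\pi\lambda_J(J)$; for $\pi\in\widehat{J}$ the coefficient equals $1$ throughout $K$ (which cancels against $\delta_e$), and the remaining $\lambda_J$-integral equals $\lambda_J(J)/k_\pi$ since $k_\pi$ is intrinsic to $J$ as a compact hypergroup with its normalized Haar, giving $k_{\Phi(\pi)}=k_\pi$. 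The main obstacle is the surjectivity step: recognising that the idempotent $\rho(\lambda_K)$ is precisely the object that dichotomises $\widehat{H}$ into the two families produced by Lemma~\ref{l:subset-of-^H}; once this is in hand and $\lambda_H$ has been identified, the remaining computations are straightforward bookkeeping.
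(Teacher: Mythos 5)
Your proposal is correct and reaches the result by the same overall skeleton as the paper (injectivity from Lemma~\ref{l:subset-of-^H}, a dichotomy of $\widehat{H}$ for surjectivity, then the Haar-measure decomposition plus the orthogonality relation (\ref{eq:orth-of-coeficients}) for the hyperdimensions), but you replace the paper's two external citations with direct arguments. For surjectivity the paper simply invokes \cite[Theorem~2.3]{vr2}, which supplies the partition of $\widehat{H}$ into representations that restrict to the identity on $K$ and those that vanish on $J\setminus\{e\}$; you reprove this dichotomy from scratch by observing that $P=\rho(\lambda_K)$ is an orthogonal projection with $\rho$-invariant range, so Schur's lemma forces $P\in\{0,I\}$, and each alternative yields one of the two families. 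This is a genuinely self-contained route and is arguably cleaner, since the paper's reliance on Vrem's statement inherits the small inaccuracy about the trivial representation that the authors themselves flag in the remark after Lemma~\ref{l:subset-of-^H}; your argument handles the exclusion of $1\in\widehat{K}$ explicitly via $\rho(\lambda_K)=I\neq 0$ (the paper instead argues this by evaluating $\pi$ on $\delta_s*\delta_{\tilde s}$). Likewise you verify the decomposition $\lambda_H=\lambda_J(J)^{-1}(\lambda_K+\lambda_J-\delta_e)$ directly rather than citing \cite{vr2}, and you compute $1/k_{\Phi(\pi)}$ from a diagonal coefficient function where the paper integrates $|\chi_{\Phi(\pi)}|^2$; both computations are equivalent instances of (\ref{eq:orth-of-coeficients}) versus (\ref{eq:characters-on-compact-hypergroups}) and give the same case analysis and the same answers. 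The only cosmetic blemishes are the stray symbol $\omega_J$ (presumably $\lambda_J$) and writing $\rho(\lambda_K)=1$ rather than $I$; neither affects the argument.
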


\begin{proof}
In Lemma~\ref{l:subset-of-^H}, we showed that $\Phi$ is an injective mapping into $\widehat{H}$.
Let $\sigma \in \widehat{H}$, we find some $\pi \in (\widehat{K}\setminus\{1\}) \cup\widehat{J}$ such that $\Phi(\pi)=\sigma$.
 By \cite[Theorem~2.3]{vr2}, there is a subset $P \subseteq \widehat{H}$ such that $ \rho|_{J \setminus \{e\}} =0$ for all $\rho \in \widehat{H}\setminus P$ while $  \rho|_K =I_\pi$  for all $\rho \in P$.  If $\sigma$ belongs to $P$, we need to show $\pi := \sigma|_J$ is an irreducible  representation of $(J,*_J)$. Note that $\pi$ is a hypergroup homomorphism, since $\pi(e)=\sigma(\lambda_K)=I$. Moreover, $\sigma|_K=I$ guarantees that $\pi$ is irreducible if and only if $\sigma$ is irreducible. Therefore, $\pi\in \widehat{J}$ and $\sigma=\Phi(\pi)$.

If $\sigma \in \widehat{H}\setminus P$, we show that $\pi:=\sigma|_K$ belongs to  $\widehat{K}\setminus\{1\}$. To do so, first note that, $\pi$ is clearly a homomorphism  with respect to $*_K$ and it is also irreducible.  Further, since $\sigma|_{J\setminus\{e\}}=0$ and the topology on $K$ is corresponding to the topology inherited from $H$ into $K$, $\pi$ is continuous with respect to the topology of $K$.
We should show that $\pi$ cannot be the trivial representation $1$ on $K$. Towards a contradiction assume that $\pi\equiv 1$. If $J\neq \{e\}$, there is some $s\neq e$ such that $\delta_s *_J \delta_{\tilde{s}} =\sum_{t\in J}\alpha_t \delta_t$, therefore,
\[
0=\pi'(s)\pi'(\tilde{s})=\pi'(\delta_s*\delta_{\tilde{s}})=\alpha_e \pi'(\lambda_K) + \sum_{t\in J\setminus \{e\}} \alpha_t \pi'(t)= \alpha_e \pi(\lambda_K)=\alpha_e\neq 0.
\]
Therefore, $\pi\neq 1$.

The fact that $d_\pi=d_{\Phi(\pi)}$ is immediate based on the first part of the proof. To study hyperdimensions, we need to apply the decomposition of the Haar measure of $H$ obtained in \cite{vr2}, that is, $\lambda_H = \lambda_K + \lambda_J'$ where $\lambda_K$ is the normalized Haar measure of $K$ and $\lambda'_J(x)=\lambda_J(x)$ for every $x\in J\setminus\{e\}$ and $\lambda_J'(x)=0$ otherwise. Furthermore,
\[
\norm{\chi_{\Phi(\pi)}}_2^2=\frac{d_{\Phi(\pi)}}{k_{\Phi(\pi)}}\lambda_H(H)= \frac{d_\pi}{k_{\Phi(\pi)}} \lambda_J(J),
\] as an immediate consequence of (\ref{eq:characters-on-compact-hypergroups}). Hence,
\begin{equation}\label{eq:k-of-joints}
\frac{d_\pi}{k_{\Phi(\pi)}} \lambda_J(J)= \int_H |\chi_{\Phi(\pi)}(x)|^2 d\lambda(x)=\int_{K} |\chi_{\Phi(\pi)}(x)|^2 d\lambda_K(x) + \sum_{x\in J\setminus\{e\}} |\chi_{\Phi(\pi)}(x)|^2 \lambda_J(x).
\end{equation}

First let $\pi\in \widehat{K} \setminus\{1\}$. Therefore, $\chi_{\Phi(\pi)}(x)=0$ for all $x\in J\setminus\{e\}$; hence,
\[
\lambda_J(J)\frac{d_{\pi}}{k_{\Phi(\pi)}}= \int_K |\chi_{\pi}(x)|^2 d\lambda_K(x)=\frac{d_\pi}{k_\pi}.
\]
Thus $k_{\Phi(\pi)}$ has to be  $k_\pi \lambda_J(J)$.

Second, let $ \pi \in \widehat{J}$. Therefore, $\chi_{\Phi(\pi)}|_K\equiv d_\pi$. Hence,
\[
\lambda_J(J)\frac{d_\pi}{k_{\Phi(\pi)}}= d_\pi^2  + \sum_{x\in J\setminus\{e\}} |\chi_{\pi}(x)|^2 \lambda_J(x)= \sum_{x\in J} |\chi_{\pi}(x)|^2 \lambda_J(x)=\lambda_J(J)\frac{d_\pi}{k_\pi}.
\]
And this implies that $k_{\Phi(\pi)}=k_\pi$.
\end{proof}

In the following example, using compact hypergroup joins, for each positive integer $n$, we construct a (commutative) hypergroup  that has representations of hyperdimension $p$ for a given $1<p<\infty$.

\begin{eg}\label{eg:each-integer}
Let $\Bbb{T}$ be the torus as a compact group and  $H_p$ the hypergroup defined in Example~\ref{eg:hyperdimension-p} for  $1<p< \infty$. Then the compact hypergroup  $\Bbb{T} \vee H_p$ is an (infinite) compact hypergroup join whose representations are either of hyperdimension $p$ or $p+1$.  
\end{eg}

\end{subsection}
 
\vskip1.0em

\begin{section}{Finite hypergroups}\label{s:finite-hypergroups}

Let $H$ be a discrete hypergroup. For each $x\in H$, it is known that function $\lambda$ defined by $\lambda(x)=(\delta_{\tilde{x}} * \delta_{{x}})(e)^{-1}$ forms a Haar measure on $H$. Therefore, $\lambda(x)\geq 1$ and the equality holds if and only if $x$ is invertible in $H$. Thus, if $\lambda(x)=1$ for every $x\in H$, $H$ is a group. 
In this section $H$  is  a finite hypergroup and $\lambda_H$ (or simply $\lambda$ if there is no risk of confusion) is the aforementioned  Haar measure on $H$. We use $\ell^1(H, \lambda)$ to denote the hypergroup algebra.  Note that $\lambda$ is not normalized unless $H$ is a trivial hypergroup. So we adjust some constants when we use results proved in the previous sections as there we   assumed that the Haar measure is normalized.

\vskip1.0em

Here by $|A|$ we mean the (finite) cardinal of a set $A$. 

\begin{proposition}\label{p:order-finite-hypergroups}
Let $H$ be a finite hypergroup. 
\begin{itemize}
\item[(1)]{Then  $\sum_{\pi \in \widehat{H}} d_\pi^2=|H|$. In particular if $H$ is commutative, $|H|=|\widehat{H}|$.}
\item[(2)]{Then $\sum_{\pi \in \widehat{H}}  k_\pi d_\pi = \lambda(H)$. In particular if $H$ is commutative, $\lambda(H)=\varpi(\widehat{H})$.}
\end{itemize}
\end{proposition}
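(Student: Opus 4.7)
The plan is to deduce both identities from the Peter--Weyl type completeness and orthogonality of coefficient functions applied to the finite-dimensional space $L^2(H,\lambda)\cong\mathbb{C}^H$, after accounting for the fact that the natural discrete Haar measure $\lambda$ (with $\lambda(e)=1$) is not normalized.

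For (1), I would observe that by the orthogonality relation (\ref{eq:orth-of-coeficients}), interpreted with respect to the normalized Haar measure $dx:=\lambda(H)^{-1}d\lambda$, the coefficient functions $\{\pi_{i,j}:\pi\in\widehat{H},\,1\le i,j\le d_\pi\}$ are pairwise orthogonal and hence linearly independent. The Fourier expansion (\ref{eq:Fourier-transform}) shows they span $L^2(H)$, and in the finite case the sum is finite, so these functions form a basis of $L^2(H,\lambda)$. Since that space has complex dimension $|H|$, counting basis elements gives $\sum_{\pi\in\widehat{H}}d_\pi^2=|H|$. When $H$ is commutative each $d_\pi=1$, so this reduces to $|\widehat{H}|=|H|$.

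For (2), I plan to evaluate the inversion formula (\ref{eq:Fourier-transform}) at the identity on $f=\delta_e$, the indicator function of $\{e\}$. The key inputs are $\pi_{i,j}(e)=\delta_{i,j}$ (because $\pi(e)$ is the identity matrix) and $\lambda(e)=1$, from which the Fourier coefficient in the normalized measure is
\begin{equation*}
\widehat{\delta_e}(\pi)_{i,j}=\int_H\delta_e(x)\overline{\pi_{i,j}(x)}\,dx=\frac{\lambda(e)\,\overline{\pi_{i,j}(e)}}{\lambda(H)}=\frac{\delta_{i,j}}{\lambda(H)}.
\end{equation*}
Substituting into (\ref{eq:Fourier-transform}) at $x=e$ gives
\begin{equation*}
1=\delta_e(e)=\sum_{\pi\in\widehat{H}}k_\pi\sum_{i,j=1}^{d_\pi}\widehat{\delta_e}(\pi)_{i,j}\,\pi_{i,j}(e)=\frac{1}{\lambda(H)}\sum_{\pi\in\widehat{H}}k_\pi d_\pi,
\end{equation*}
which rearranges to $\sum_{\pi\in\widehat{H}}k_\pi d_\pi=\lambda(H)$. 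In the commutative case one then combines this with Proposition~\ref{p:Plancherel}, which identifies $\varpi(\pi)$ with $k_\pi$, to obtain $\varpi(\widehat{H})=\sum_\chi k_\chi=\lambda(H)$.

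The only substantive difficulty is bookkeeping: the results of Section~\ref{s:plancherel} are stated for the normalized Haar measure, whereas here $\lambda(e)=1$ and $\lambda(H)$ can be strictly larger than $1$, so every inner product, Fourier coefficient and instance of the Plancherel measure must be rescaled consistently by the appropriate power of $\lambda(H)$. Provided this rescaling is tracked, both identities fall out of a single application of Peter--Weyl completeness at the identity element.
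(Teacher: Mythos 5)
Your proposal is correct and follows essentially the same route as the paper: part (1) by counting the coefficient functions as a basis of the $|H|$-dimensional space $\ell^2(H,\lambda)$, and part (2) by applying the Peter--Weyl machinery to $f=\delta_e$ with $\widehat{\delta_e}(\pi)$ a multiple of the identity matrix. The only cosmetic difference is that you evaluate the inversion formula (\ref{eq:Fourier-transform}) at $e$ whereas the paper uses the Parseval identity (\ref{eq:norm-2}); in the finite setting these are equivalent, and your normalization bookkeeping is consistent.
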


\begin{proof}
The proof of (1) is simply based on this fact that $\ell^2(H, \lambda)$ is a finite dimensional Hilbert space with $\{ \pi_{i,j}: \ \pi \in \widehat{H}, i,j =1, \ldots, d_\pi\}$ and $\{ \delta_x: x\in H\}$ as two of its basis. 

To prove (2), note that for $f=\delta_e \in \ell^2(H,\lambda)$, by (\ref{eq:norm-2}) and adjusting the normalization,
\[
 \norm{f}_2^2 = \frac{1}{\lambda(H)} \sum_{\pi \in \widehat{H}} k_\pi \norm{ \widehat{f}(\pi)}_2^2.
\]
But,  on one hand $\norm{f}_2^2= 1$ and on the other hand, $\widehat{f}(\pi)=I_{d_\pi}$.
\end{proof}

\begin{cor}\label{c:when-group-finite}
Let $H$ be a finite   hypergroup. Then $H$ is a group if and only if $k_\pi=d_\pi$ for all $\pi \in \widehat{H}$.
\end{cor}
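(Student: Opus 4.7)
The plan is to deduce the corollary directly from Proposition~\ref{p:order-finite-hypergroups} together with the basic facts about the (unnormalized) Haar measure on a finite hypergroup recalled at the start of Section~\ref{s:finite-hypergroups}: namely, that $\lambda(x)\geq 1$ for every $x\in H$, with equality if and only if $x$ is invertible, and that $H$ is a group precisely when every element is invertible.

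For the forward implication, if $H$ is a group then Example~\ref{eg:G-as-a-compact-hypergroup} (the compact-group case applied to a finite group) gives $k_\pi=d_\pi$ for all $\pi\in\widehat{H}$ via the classical Peter--Weyl orthogonality.

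For the converse, I would assume $k_\pi=d_\pi$ for every $\pi\in\widehat{H}$ and combine the two identities of Proposition~\ref{p:order-finite-hypergroups}:
\[
\lambda(H)\;=\;\sum_{\pi\in\widehat{H}} k_\pi d_\pi\;=\;\sum_{\pi\in\widehat{H}} d_\pi^{\,2}\;=\;|H|.
\]
Since $\lambda(H)=\sum_{x\in H}\lambda(x)$ and each summand satisfies $\lambda(x)\geq 1$, the equality $\lambda(H)=|H|$ forces $\lambda(x)=1$ for every $x\in H$. By the characterization recalled above, every element of $H$ is then invertible, so $H$ is a group.

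There is essentially no hard step here: the corollary is a quick bookkeeping consequence of the proposition just proven. The only thing that requires a modicum of care is to remember that in this section the Haar measure is not normalized, so the two formulas in Proposition~\ref{p:order-finite-hypergroups} must be quoted in their unnormalized form exactly as stated; once that is done, the inequality $\lambda(x)\geq 1$ immediately collapses to an equality on each $x$, which is all one needs.
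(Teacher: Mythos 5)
Your proof is correct and follows essentially the same route as the paper: the forward direction via the Peter--Weyl identity of Example~\ref{eg:G-as-a-compact-hypergroup}, and the converse by combining both parts of Proposition~\ref{p:order-finite-hypergroups} to get $\lambda(H)=|H|$ and then using $\lambda(x)\geq 1$ with equality exactly for invertible elements. No gaps.
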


\begin{proof}
If $H$ is a group, it is  known that $k_\pi=d_\pi$ for every $\pi \in \widehat{H}$, by Example~\ref{eg:G-as-a-compact-hypergroup}. Conversely, let $k_\pi=d_\pi$ for every $\pi \in \widehat{H}$. Therefore, by Proposition~\ref{p:order-finite-hypergroups} we have
\[
|H| = \sum_{\pi \in \widehat{H}} k_\pi d_\pi = \lambda(H).
\]
And this implies that for every $x\in H$, $\lambda(x)=1$ which imposes $H$ to become a group.
\end{proof}

\begin{proposition}\label{p:orthogonality-for-finite-hypergroups}
Let $H$ be a finite commutative hypergroup. Then the following orthogonality relations hold on $H$.
\begin{equation}
\sum_{ x \in H} \chi_\pi(x) \overline{\chi_\sigma(x)} \lambda(x) = \left\{
\begin{array}{l l}
0 & \text{if $\pi \neq \sigma$},\\
\frac{ \lambda(H)}{k_\pi}& \text{if $\pi =\sigma$}.
\end{array}
\right.
\end{equation}
and
\begin{equation}\label{eq:orthogonality-dual-finite}
\sum_{\pi \in \widehat{H}} \chi_\pi(x) \overline{\chi_\pi(y)} k_\pi = \left\{
\begin{array}{l l}
0 & \text{if $x\neq y$}\\
\frac{\lambda(H)}{\lambda(x)} & \text{if $x=y$}
\end{array}\right. .
\end{equation}
\end{proposition}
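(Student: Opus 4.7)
The plan is to derive part (1) directly from the character orthogonality relation (\ref{eq:characters-on-compact-hypergroups}), then to deduce part (2) from part (1) by the standard ``square unitary matrix'' trick enabled by Proposition~\ref{p:order-finite-hypergroups}(1).

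For part (1), recall that in the excerpt the Haar measure on a compact hypergroup is taken to be normalized, while in the present section we are working with the unnormalized Haar measure $\lambda$ (so that $\lambda(x)\geq 1$ with equality iff $x$ is invertible). The normalized Haar measure on $H$ is therefore $\lambda/\lambda(H)$, and since $H$ is commutative we have $d_\pi=1$ for every $\pi\in\widehat{H}$. Rewriting (\ref{eq:characters-on-compact-hypergroups}) as a finite sum under this renormalization gives immediately
\[
\frac{1}{\lambda(H)}\sum_{x\in H}\chi_\pi(x)\overline{\chi_\sigma(x)}\,\lambda(x)
=\begin{cases}1/k_\pi & \pi=\sigma\\ 0 & \pi\neq\sigma\end{cases},
\]
which, after multiplying through by $\lambda(H)$, is exactly part (1).

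For part (2), the strategy is to package part (1) as unitarity of a square matrix. Let
\[
U_{\pi,x}:=\sqrt{\frac{k_\pi\lambda(x)}{\lambda(H)}}\,\chi_\pi(x),\qquad \pi\in\widehat{H},\ x\in H.
\]
Part (1) reads $(UU^\ast)_{\pi,\sigma}=\delta_{\pi,\sigma}$. By Proposition~\ref{p:order-finite-hypergroups}(1), in the commutative case $|\widehat{H}|=|H|$, so $U$ is a square matrix; hence $UU^\ast=I$ forces $U^\ast U=I$ as well. Computing $(U^\ast U)_{x,y}$ gives
\[
\frac{\sqrt{\lambda(x)\lambda(y)}}{\lambda(H)}\sum_{\pi\in\widehat{H}}k_\pi\,\overline{\chi_\pi(x)}\chi_\pi(y)=\delta_{x,y},
\]
and taking complex conjugates (both sides are real, and $k_\pi>0$) yields
\[
\sum_{\pi\in\widehat{H}}\chi_\pi(x)\overline{\chi_\pi(y)}\,k_\pi
=\begin{cases}0 & x\neq y,\\[2pt] \lambda(H)/\lambda(x)&x=y,\end{cases}
\]
which is (\ref{eq:orthogonality-dual-finite}).

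The only genuinely substantive ingredient is the equality $|\widehat{H}|=|H|$ needed to conclude $U^\ast U=I$ from $UU^\ast=I$; everything else is bookkeeping with the normalization of $\lambda$. Since this dimension count has already been recorded in Proposition~\ref{p:order-finite-hypergroups}(1), no real obstacle remains, and the proof is essentially the standard finite Fourier duality argument adapted to the hypergroup weights $k_\pi$ and $\lambda(x)$.
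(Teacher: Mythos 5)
Your proof is correct and follows essentially the same route as the paper: part (1) is the renormalized form of the character orthogonality relation (\ref{eq:characters-on-compact-hypergroups}), and part (2) is obtained from the square matrix with entries $\chi_\pi(x)\sqrt{\lambda(x)k_\pi/\lambda(H)}$, whose rows are orthonormal by part (1), hence unitary, hence with orthonormal columns. Your explicit appeal to Proposition~\ref{p:order-finite-hypergroups}(1) for $|\widehat{H}|=|H|$ makes precise a point the paper leaves implicit in asserting the matrix is $|H|\times|H|$.
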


\begin{proof}
Note that   the first orthogonality relation   is nothing but (\ref{eq:characters-on-compact-hypergroups}). To prove the second one, let $A$ be a $|H| \times |H|$ matrix whose rows are labelled by elements of   $\widehat{H}$ and whose columns are labelled by elements of $H$ with entries  $a_{x, \pi} = \chi_\pi(x) \sqrt{\lambda(x) k_\pi}/ \sqrt{\lambda(H)}$. Thus by the first part of this proposition, the rows of the matrix $A$ are orthonormal. This means that $A$ is unitary and hence its columns are also orthonormal, which finishes the proof.
\end{proof} 
 
 \begin{rem}\label{r:dual-finite-no-hypergroup}
 Note that in Proposition~\ref{p:order-finite-hypergroups} and especially in Proposition~\ref{p:orthogonality-for-finite-hypergroups} we did not assume that $\widehat{H}$ is a hypergroup with respect to the Plancherel measure. 
 \end{rem}
 
 \begin{eg}\label{eg:finite-hypergroup-without-dual}
 Let $H=\{e,a,b\}$ the hypergroup presented in \cite[Example~9.1C]{je}. It is shown that $\widehat{H}$ does not form a hypergroup for $\widehat{H}=\{1, \chi, \psi\}$. The convolution formulas computed  in \cite{je} imply that the Haar measure $\lambda$ on $H$ takes values $1, 4, 4$ for $e,a, b$ respectively. One also may compute the hyperdimensions based on their presence in (\ref{eq:orth-of-coeficients}) and gets $k_1=1$, $k_\chi= 36/17$, and $k_\psi=100/17$. Note that regarding these hyperdimensions and the character table of $H$, the orthogonality relation (\ref{eq:orthogonality-dual-finite}) holds, although $\widehat{H}$ is not a hypergroup.
 \end{eg}
 
As we observed in Example~\ref{eg:finite-hypergroup-without-dual}  unlike dimensions, the hyperdimensions of a compact hypergroup are not necessarily positive integers. Even more, in the following example we observe that for every real number $1\leq p < \infty$, there is a (commutative) hypergroup $H_p$ of order $2$ which has $p$ as a hyperdimension.

\begin{eg}\label{eg:hyperdimension-p}
Let $1<p<\infty$ be a fixed real number.  Define $H_p:=\{0,a\}$ where $\delta_a * \delta_a = (1/p) \delta_0 + (1-1/p) \delta_a$.  Note that this implies that $\lambda(a)=p$.
One can easily observe that the dual object of $H$ is nothing but $\widehat{H}=\{1, \chi\}$ where $\chi(a)=-1/p$.  But this implies that $\norm{\chi}_2^2= (p+1)/p$ and therefore, $k_\chi=p$.
\end{eg}

\vskip2.0em

If $A$ is a Banach algebra, we let $A  {\otimes}_\gamma A$ denote the projective tensor product of $A$ with itself.  We say $A$ is \ma{amenable} if it admits a \ma{bounded approximate diagonal (b.a.d.)} that is  a bounded net $(m_\alpha ) \subseteq  A \otimes_\gamma A$ which satisfies
\[
{\bf m}(m_\alpha) a \rightarrow a, \ \ \ a \; {\bf m}(m_\alpha)\rightarrow a, \ \ \ \ \text{ and} \ \ \ a\cdot m_\alpha - m_\alpha  \cdot a\rightarrow 0
\]
for $a$ in $A$, where ${\bf m} : A \otimes_\gamma A \rightarrow A$ is the multiplication map, and the module actions of $A$ on $A \otimes_\gamma  A$ are given on elementary tensors by $a\cdot  ( b \otimes c ) = ( a b ) \otimes c$ and $(b \otimes c ) \cdot a = b \otimes( c a)$.  This is not the original definition of amenability but it is equivalent to the cohomological one.

 Note that if $A$ is a finite dimensional commutative amenable Banach algebra,   there is a unique (\cite{nico-mahya}) element $\Delta \in A\otimes_\gamma A$ so that ${\bf m}(\Delta)=e_A$ and $a \cdot \Delta =\Delta\cdot a$ for every $a\in A$ and the identity $e_A$. $\Delta$ is called the \ma{diagonal} of $A$.  
We can quantify amenability via the \ma{amenability constant}, which was defined in \cite{jo}. Let 
\[
\AM(A) = \inf \{ \sup_\alpha \norm{m_\alpha}:\  (m_\alpha) \ \ \text{is a b.a.d. for $A$}\}
\]
where we allow the infimum of an empty set to be $\infty$. Again for a finite dimensional amenable commutative Banach algebra $A$, $\AM(A)=\norm{\Delta}$.

For a locally compact group it is known that the group algebra is amenable if and only if its amenability constant is $1$ (see \cite[Corollary 1.11]{sto}). 
For a finite group $G$, the amenability constant of the center of the group algebra, denoted by $Z\ell^1(G)$ has been studied before in \cite{AzSaSp, ma3, yemon-AM}.  Note that $Z\ell^1(G)$ is nothing but the hypergroup algebra of $\conj(G)$.   In the following we generalize these studies by  computing the amenability constant for   finite commutative  hypergroups and observe that how different the results could be in comparison to the ones for $Z\ell^1(G)$.

The following theorem and its proof are a hypergroup adaptation of \cite[Theorem~1.8]{AzSaSp}. 

\begin{theorem}\label{t;AM-finite-hypergroup}
Let $H$ be a finite commutative hypergroup with the Haar measure $\lambda$. Then 
\[
\AM(\ell^1(H,\lambda)) = \frac{1}{\lambda(H)^2} \sum_{x,y \in H} \left| \sum_{\chi \in \widehat{H}}   {k_\chi^2}  \chi (x) \overline{\chi (y)}\right| \lambda(x) \lambda(y).
\]
Also   $\AM(\ell^1(H,\lambda))\geq 1$ and the equality  $\AM(\ell^1(H,\lambda))=1$ holds if and only if $H$ is a   group. 
\end{theorem}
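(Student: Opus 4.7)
The plan is to realize $\ell^1(H,\lambda)$ as a finite-dimensional commutative semisimple Banach algebra, exhibit its unique diagonal, and analyze when the projective norm of that diagonal equals $1$. By a standard algebraic argument (valid for any commutative semisimple finite-dimensional unital algebra), the unique element $\Delta$ of $\ell^1(H,\lambda)\otimes_\gamma \ell^1(H,\lambda)$ satisfying $\mathbf{m}(\Delta)=\delta_e$ and $a\cdot\Delta=\Delta\cdot a$ for every $a$ is $\Delta=\sum_{\chi}e_\chi\otimes e_\chi$, where $\{e_\chi\}_{\chi\in\widehat H}$ is the set of minimal idempotents; consequently $\AM(\ell^1(H,\lambda))=\|\Delta\|_{\ell^1\otimes_\gamma\ell^1}$.

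Rescaling Lemma~\ref{l:convolution-coefficients} from the normalized Haar measure to our $\lambda$ (for which $\lambda(e)=1$ but $\lambda(H)\geq |H|$) gives $\chi\ast\chi=(\lambda(H)/k_\chi)\chi$ and $\chi\ast\psi=0$ for distinct $\chi,\psi\in\widehat H$, so $e_\chi:=(k_\chi/\lambda(H))\chi$ is a minimal idempotent, and the second orthogonality relation (\ref{eq:orthogonality-dual-finite}) of Proposition~\ref{p:orthogonality-for-finite-hypergroups} verifies $\sum_\chi e_\chi=\delta_e$. Under the canonical isometric identification $\ell^1(H,\lambda)\otimes_\gamma \ell^1(H,\lambda)\cong \ell^1(H\times H,\lambda\otimes\lambda)$, $\Delta$ becomes the function
\[
\Delta(x,y)=\frac{1}{\lambda(H)^2}\sum_{\chi\in\widehat H} k_\chi^2\,\chi(x)\chi(y).
\]
Since $y\mapsto \tilde y$ is $\lambda$-preserving with $\chi(\tilde y)=\overline{\chi(y)}$, the $\ell^1$-norm of $\Delta$ agrees with the expression in the statement. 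For the bound $\AM\geq 1$, I would use Proposition~\ref{p:orthogonality-for-finite-hypergroups} with $\psi=1$ to obtain $\sum_x\chi(x)\lambda(x)=\lambda(H)\delta_{\chi,1}$, hence $\sum_{x,y}\Delta(x,y)\lambda(x)\lambda(y)=k_1^2=1$; reindexing $\chi\mapsto\overline\chi$ (with $k_{\overline\chi}=k_\chi$) shows $\Delta$ is real-valued, so $\|\Delta\|_1\geq 1$ with equality if and only if $\Delta(x,y)\geq 0$ for all $(x,y)\in H\times H$.

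For the characterization of the equality case, if $H$ is a group then Corollary~\ref{c:when-group-finite} gives $k_\chi=1$ and $\lambda(x)=1$, and (\ref{eq:orthogonality-dual-finite}) yields $\Delta(x,y)=|H|^{-1}\delta_{x,y}\geq 0$, hence $\AM=1$. For the converse, I work in the $\chi(x)\overline{\chi(y)}$-form of the statement and introduce
\[
g(x,y):=\sum_{\chi\in\widehat H} k_\chi(k_\chi-1)\,\chi(x)\overline{\chi(y)},
\]
so that (\ref{eq:orthogonality-dual-finite}) gives $\lambda(H)^2\Delta(x,y)=g(x,y)+\lambda(H)\lambda(x)^{-1}\delta_{x,y}$. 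The hypothesis $\Delta\geq 0$ forces $g(x,y)\geq 0$ whenever $x\neq y$, and orthogonality against the trivial character (using $k_1=1$) gives $\sum_x g(x,y)\lambda(x)=0$ for every $y$, whence $g(y,y)\leq 0$. On the other hand $g(y,y)=\sum_\chi k_\chi(k_\chi-1)|\chi(y)|^2\geq 0$ term-by-term since $k_\chi\geq 1$, so $g(y,y)=0$; setting $y=e$ forces $\sum_\chi k_\chi(k_\chi-1)=0$, and non-negativity of each summand yields $k_\chi=1$ for every $\chi\in\widehat H$. Corollary~\ref{c:when-group-finite} then concludes that $H$ is a group.

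The main technical subtlety is the bookkeeping around the unnormalized Haar measure when transporting results such as Lemma~\ref{l:convolution-coefficients} into the setting of Section~\ref{s:finite-hypergroups}, and passing between the $\chi(x)\chi(y)$ form (which arises naturally from $\sum_\chi e_\chi\otimes e_\chi$) and the $\chi(x)\overline{\chi(y)}$ form of the statement via the $\lambda$-preserving involution $y\mapsto \tilde y$; once these identifications are made, Steps 2 and 3 reduce to direct Fourier computations on $\widehat H$.
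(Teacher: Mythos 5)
Your identification of the diagonal is exactly the paper's: the authors also take $\Delta=\lambda(H)^{-2}\sum_\chi k_\chi^2\,\chi\otimes\chi$ (your $\sum_\chi e_\chi\otimes e_\chi$), justify uniqueness via \cite{nico-mahya}, and read off $\AM(\ell^1(H,\lambda))=\norm{\Delta}_1$; your extra care with the unnormalized Haar measure and with passing from $\chi(x)\chi(y)$ to $\chi(x)\overline{\chi(y)}$ via the involution is bookkeeping the paper leaves implicit. Where you genuinely diverge is in the lower bound and the equality case. The paper estimates $\norm{\Delta}_1$ from below by discarding the off-diagonal terms $x\neq y$, then using $\lambda(x)^2\geq\lambda(x)$ (their inequality $(\dagger)$) together with the character orthogonality and $\sum_\chi k_\chi=\lambda(H)$; the converse direction is then immediate, since $\lambda(x_0)>1$ for some $x_0$ (with $\chi(x_0)\neq 0$ for some $\chi$, guaranteed by the dual orthogonality relation) makes $(\dagger)$ strict. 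You instead integrate $\Delta$ against the constant function to get total mass $k_1^2=1$, observe $\Delta$ is real, conclude $\norm{\Delta}_1\geq 1$ with equality iff $\Delta\geq 0$ pointwise, and then run a second-moment argument on $g(x,y)=\sum_\chi k_\chi(k_\chi-1)\chi(x)\overline{\chi(y)}$ to force $k_\chi=1$ for all $\chi$ and invoke Corollary~\ref{c:when-group-finite}. Both arguments are correct and of comparable length. The paper's route is more elementary (a chain of termwise inequalities) and pinpoints the failure of equality directly in the Haar weights $\lambda(x)$; yours is structurally cleaner (the bound $\AM\geq 1$ becomes the tautology that an idempotent of total mass one has $\ell^1$-norm at least one, with equality exactly for positive idempotents) and yields the sharper intermediate conclusion that $\AM=1$ forces every hyperdimension to equal $1$, which is a statement of independent interest given the paper's emphasis on hyperdimensions.
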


\begin{proof}
Using Lemma~\ref{l:convolution-coefficients} one can check that for
\begin{equation}
\displaystyle \Delta =\frac{1}{\lambda(H)^2}   \sum_{\chi \in \widehat{H}} {k_\chi^2}   \chi  \otimes  \chi 
\end{equation}
we have $\psi  \cdot \Delta = \Delta \cdot \psi $ and even more, $\psi * {\bf m}(\Delta)= {\bf m}(\Delta)* \psi = \psi$ for every character $\psi \in \widehat{H}$.  But note that the set of characters  is a basis for $\ell^1(H,\lambda)$. Thus $\Delta$ is the unique diagonal of $\ell^1(H, \lambda)$. 

 So to compute the amenability constant of $\ell^1(H, \lambda)$ it is enough to compute the $1$-norm of $\Delta$ as follows.
\begin{eqnarray*}
\AM (\ell^1(H,\lambda))  &=&  \frac{1}{\lambda(H)^2}  \sum_{x,y \in H}  \left|\sum_{\chi \in \widehat{H}}  {k_\chi^2}   \chi(x)  \overline{\chi(y)} \right| \lambda(x) \lambda(y) \\
&\geq&    \frac{1}{\lambda(H)^2}  \sum_{x\in H}   \sum_{\chi \in \widehat{H}} {k_\chi^2}    |\chi (x)|^2  \lambda(x)^2 \\
&\geq &    \frac{1}{\lambda(H)^2}    \sum_{\chi \in \widehat{H}}  {k_\chi^2}   \sum_{x\in H} |\chi (x)|^2  \lambda(x)  \ \ \ \ \ \ \ \ \ \ \ \ \ \ \ \ \ \ \ \ \ \ \ \ (\dagger)\\
&\geq &  \frac{1}{\lambda(H)^2}     \sum_{\chi \in \widehat{H}}   {k_\chi^2}   \frac{\lambda(H)}{k_\chi} \\
&\geq&  \frac{1}{\lambda(H)}  \sum_{\chi \in \widehat{H}} k_\chi =1.  
\end{eqnarray*}

It is known  that for an amenable locally compact  group $H$, the amenability constant of the group algebra is $1$ (see \cite[Corollary 1.11]{sto}). Conversely, if $H$ is not a group there should be at least one $x\in H$ so that $\lambda(x)>1$.  Meanwhile there is some character $\chi $ so that $\chi (x) \neq 0$. Therefore, the inequality $(\dagger)$ has to be strict. Hence,  $\AM(\ell^1(H,\lambda))>1$.
\end{proof}

\begin{eg}\label{eg:AM(Hp)}
Let $H_p$ be the commutative hypergroup introduced in Example~\ref{eg:hyperdimension-p} for $1<p<\infty$. By Theorem~\ref{t;AM-finite-hypergroup}, we have
\[
AM(\ell^1(H_p, h))=\frac{5p^2 - 2p +1}{(p+1)^2}.
\]
Note that $p \mapsto \AM(\ell^1(H_p,\lambda))$ is an increasing function whose range is the interval $(1,5)$. 
\end{eg}

\begin{rem}\label{r:norm-of-idempotents}
Note that for a commutative finite hypergroup $H$, the diagonal $\Delta \in \ell^1(H \times H, \lambda \times \lambda)$ is an idempotent. Example~\ref{eg:AM(Hp)} implies that for every $r>1$, we may find a commutative hypergroup which has an idempotent whose $1$-norm  is $r$. Compare this   observation with  Saeki's result, in \cite{saeki}, that says for an abelian locally compact group $G$ and for every non-contractive idempotent $\mu \in M(G)$, $\norm{\mu}_{M(G)} \geq  (1+ \sqrt{2})/2$. 
\end{rem}

\begin{eg}\label{eg:AM-conj(G)}
Let $G$ be a finite group. For the finite hypergroup $\conj(G)$ with the Haar measure $\lambda_\conj(C)=|C|$, the formula of $\AM(\ell^1(\conj(G), \lambda_\conj))$ corresponds to the one in \cite[Theorem~1.8]{AzSaSp} computed for $Z\ell^1(G)$. 
Also for the finite hypergroup $\widehat{G}$ with the Haar measure $\lambda_{\widehat{G}}(\pi)=d_\pi^2$, the formula obtained for $\AM(\ell^1(\widehat{G}, \lambda_{\widehat{G}}))$ corresponds to the one in \cite[Proposition~4.2]{ma15} computed for $ZA(G)$.
\end{eg}

\end{section}

\begin{section}{Uncertainty principle for compact hypergroups}\label{s:uncertainty}

The uncertainty principle has been  studied   in special settings such as $\Bbb{R}^n$ as well as in more general settings such as locally compact groups and in particular for compact groups, and a variety of results concerning lower bounds for the product of the measures of the support of a function and the support of its Fourier transform have been derived.   

The uncertainty principle on commutative hypergroups has been studied before. Many researchers considered different variations of the uncertainty  inequality for a variety of commutative hypergroups. To name a few,   {S}turm-{L}iouville hypergroups, \cite{uc1},  commutative hypergroups with $1$ not in the support of the Plancherel measure, \cite{uc3}, finite and $\sigma$-compact hypergroups, \cite{uc4}, and ultraspherical expansions, \cite{uc5}, were studied for this property.
  In this section we focus on (not necessarily commutative) compact hypergroups and prove a Heisenberg inequality for them. 
   
The main observation for the proof of the  following theorem is inspired from \cite{chua}.  In the following $\tr(A)$ denotes the trace of a matrix $A$.

\begin{theorem}\label{t:uncertainty-1}
Let $H$ be a compact hypergroup with the Haar measure $\lambda$. Then for each $f \in L^2(H)$, 
\[
\lambda(H) \leq   \lambda(\supp(f)) \sum_{\pi \in \supp(\widehat{f})}  k_\pi d_\pi.
\]
\end{theorem}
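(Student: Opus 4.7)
Let $E=\supp(f)$ and $F=\supp(\widehat{f})$. If $\sum_{\pi\in F}k_\pi d_\pi=\infty$ or $f\equiv 0$ the inequality is trivial, so assume both are finite. My plan is to derive a pointwise bound for $|f(x)|^2$ by applying Cauchy--Schwarz to the Fourier inversion formula $(\ref{eq:Fourier-transform})$, and then integrate over $E$. This is essentially the strategy used in \cite{chua} for compact groups, adapted to the hypergroup setting by keeping careful track of the weights $k_\pi$.

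First I would observe that, under the standing normalization $\lambda(H)=1$, the identity
$$f(x) = \sum_{\pi \in F} k_\pi \sum_{i,j=1}^{d_\pi} \widehat{f}(\pi)_{i,j}\,\pi_{i,j}(x)$$
holds pointwise (after passing to a continuous representative of $f$). Indeed, applying Cauchy--Schwarz to tails of the series, using $(\ref{eq:norm-2})$ for the coefficients and the pointwise bound on the $\pi_{i,j}$ established below, gives uniform convergence of the partial sums whenever $\sum_{\pi\in F}k_\pi d_\pi<\infty$.

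Now, applying the weighted Cauchy--Schwarz inequality, pairing $k_\pi^{1/2}\widehat{f}(\pi)_{i,j}$ with $k_\pi^{1/2}\pi_{i,j}(x)$, I would obtain
$$|f(x)|^2 \;\leq\; \Bigl(\sum_{\pi \in F} k_\pi \norm{\widehat{f}(\pi)}_2^2\Bigr)\Bigl(\sum_{\pi \in F} k_\pi \sum_{i,j=1}^{d_\pi} |\pi_{i,j}(x)|^2\Bigr).$$
The first factor equals $\norm{f}_2^2$ by the Plancherel identity $(\ref{eq:norm-2})$. For the second factor, the key input is that every irreducible representation $\pi$ of $H$ is norm-decreasing on $M(H)$, so $\norm{\pi(x)}_{\mathrm{op}}=\norm{\pi(\delta_x)}_{\mathrm{op}}\leq\norm{\delta_x}_{M(H)}=1$; hence
$$\sum_{i,j=1}^{d_\pi} |\pi_{i,j}(x)|^2 \;=\; \tr\bigl(\pi(x)^*\pi(x)\bigr) \;\leq\; d_\pi\,\norm{\pi(x)}_{\mathrm{op}}^2 \;\leq\; d_\pi.$$
Combining these estimates yields $|f(x)|^2\leq \norm{f}_2^2\sum_{\pi\in F}k_\pi d_\pi$ for every $x\in H$.

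To finish, I would integrate this pointwise inequality over $E=\supp(f)$:
$$\norm{f}_2^2 \;=\; \int_E |f(x)|^2\,d\lambda(x) \;\leq\; \lambda(E)\,\norm{f}_2^2\sum_{\pi \in F}k_\pi d_\pi,$$
and dividing by the positive quantity $\norm{f}_2^2$ gives $1=\lambda(H)\leq\lambda(E)\sum_{\pi\in F}k_\pi d_\pi$, as desired. The one step where hypergroups genuinely differ from groups, and which I expect to be the crux of the argument, is the bound $\sum_{i,j}|\pi_{i,j}(x)|^2\leq d_\pi$: for compact groups this is the identity $\|\pi(x)\|_{HS}^2=d_\pi$ that comes for free from unitarity of $\pi(x)$, whereas here $\pi(x)$ is only norm-decreasing, so one must invoke the operator norm bound built into the definition of a hypergroup representation rather than any unitarity.
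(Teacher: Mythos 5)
Your proposal is correct and follows essentially the same route as the paper: both arguments derive the pointwise bound $|f(x)|^2\leq \norm{f}_2^2\sum_{\pi\in\supp(\widehat{f})}k_\pi d_\pi$ from the inversion formula (\ref{eq:Fourier-transform}) via Cauchy--Schwarz, with the crucial input being the norm-decreasing property of $\pi$ giving $\norm{\pi(x)}_{HS}^2\leq d_\pi$ (the paper packages this as $|\tr(\widehat{f}(\pi)\overline{\pi}(\tilde{x}))|\leq\sqrt{d_\pi}\,\norm{\widehat{f}(\pi)}_2$ before a second Cauchy--Schwarz over $\pi$, while you apply one weighted Cauchy--Schwarz over all indices at once). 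Your finishing step of integrating the pointwise bound over $\supp(f)$ is equivalent to the paper's use of $\norm{f}_2^2\leq\norm{f}_\infty^2\,\lambda(\supp(f))$, so the two proofs differ only cosmetically.
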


\begin{proof}
Without loss of generality assume that $\lambda(H)=1$. 
Let $f \in L^2(H)$. If $\supp(\widehat{f})$ is infinite there is nothing left to be proved. So assume that $\supp(\widehat{f})$ is finite. In this case, $f$ is continuous and therefore for an arbitrary  $x\in H$, applying (\ref{eq:Fourier-transform}) we get
\begin{eqnarray*}
|f(x)| &\leq&  \sum_{\pi \in \supp(\widehat{f})} k_\pi |\widehat{f}(\pi)_{i,j} \pi_{i,j}(x)|\\
&=& \sum_{\pi \in \supp(\widehat{f})} k_\pi |\widehat{f}(\pi)_{i,j} \overline{\pi}_{j,i}(\tilde{x})|\\
&=& \sum_{\pi \in \supp(\widehat{f})} k_\pi |\tr( \widehat{f}(\pi) \overline{\pi}(\tilde{x}))|.
\end{eqnarray*}
Note that by properties of the Hilbert-Schmidt norm on matrices and since $\overline{\pi}$ is a contractive representation on $M(H)$,
\[
|\tr( \widehat{f}(\pi) \overline{\pi}(\tilde{x}) )|  \leq \norm{\widehat{f}(\pi)}_2  \norm{\overline{\pi}(\tilde{x})}_2  \leq \norm{\widehat{f}(\pi)}_2  \sqrt{d_\pi} \norm{\overline{\pi}(\tilde{x})}  \leq \sqrt{d_\pi} \norm{\widehat{f}(\pi)}_2.
\]
Therefore, by  H\"{o}lder's inequality, one gets
\begin{eqnarray*}
|f(x)|^2 &\leq&   \left( \sum_{\pi \in \supp(\widehat{f})} k_\pi \norm{\widehat{f}(\pi)}_2^2\right) \left(\sum_{\pi \in \supp(\widehat{f})}   k_\pi d_\pi \right) \\
&=&  \norm{f}_2^2 \sum_{\pi \in \supp(\widehat{f})}  k_\pi d_\pi \\
&\leq& \norm{f}_\infty^2\;  \lambda(\supp(f)) \; \sum_{\pi \in \supp(\widehat{f})}   k_\pi d_\pi.
\end{eqnarray*}
\end{proof}

\begin{eg}\label{eg:uncertainty-compact-groups}
For a compact group $G$, for a function $f\in L^2(G)$, Theorem~\ref{t:uncertainty-1} implies the classical Heisenberg inequality
\begin{equation}\label{eq:uncertainty-compact}
\lambda(G) \leq \lambda_G(\supp(f)) \sum_{\pi \in \supp(\widehat{f})} d_\pi^2.
\end{equation}
If $f$ is a central function, i.e. $f\in ZL^2(G)$. Then $f$ can be considered as a function in $L^2(\conj(G))$. Note that in this case, for each $\pi \in \widehat{G}$, $\psi:=d_\pi^{-1} \chi_\pi  \in \widehat{\conj}(G)$ is linear while $k_{\psi }=d_\pi^2$. Therefore, Theorem~\ref{t:uncertainty-1} still implies the same inequality (\ref{eq:uncertainty-compact}). 

If $G$ is a finite group, then for each $f\in Z\ell^2(G)(=\ell^2(\conj(G))$, the Fourier transform $\widehat{f}$ is indeed a function in $\ell^2(\widehat{G})$. 
Note that $\widehat{\widehat{f}}=f $ and its support is nothing but the set all conjugacy classes $C \in \conj(G)$, for them $f(C) \neq 0$. Note that for each $C\in \conj(G)$, $k_C=|C|$ which is a direct corollary of Proposition~\ref{p:Plancherel}.  Also $|\widehat{G}|=  \sum_{\pi \in \widehat{G}} d_\pi^2$ which is nothing but $\lambda(G)$.
Now applying Theorem~\ref{t:uncertainty-1} for the finite hypergroup $\widehat{G}$ we get the inequality (\ref{eq:uncertainty-compact}) again.
\end{eg}

\end{section}
\vskip1.0em

\noindent{\bf \textsf{Acknowledgements.}}
The first author was  supported by a Ph.D. Dean's Scholarship at  University of Saskatchewan and a postdoctoral fellowship at University of Waterloo for this research. The first author  thanks Y. Choi, E. Samei, and N. Spronk for their support and encouragement. The second author was partially supported by the Center of Excellence of Algebraic Hyperstructures and its Applications of Tarbiat Modares University (CEAHA), Iran.  The authors are grateful   to the referee for correcting many typos and making suggestions to improve the exposition.

\def\cprime{$'$} \def\cprime{$'$}


\normalsize
{\sc
{
\vskip0.1em
\noindent  {\bf \sc Addresses:} \newline
\indent{\bf Mahmood Alaghmandan}
\newline\indent
Department of Mathematical Sciences,
Chalmers University of Technology and   University of Gothenburg,
Gothenburg SE-412 96, Sweden}}\newline\indent
Email: \texttt{mahala@chalmers.se}

\vskip0.5em
{\sc
{\bf Massoud Amini}
\newline\indent
Department of Mathematics, Faculty of Mathematical Sciences, Tarbiat Modares University, Tehran, 14115-134, Iran
\newline\indent
}
Email: \texttt{mamini@modares.ac.ir}
\end{document}